\documentclass[11pt,letterpaper]{amsart}
\usepackage[T1]{fontenc}
\usepackage[utf8]{inputenc}
\usepackage[english]{babel}
\usepackage{lmodern}
\usepackage{microtype}
\usepackage[margin=2.65cm]{geometry}
\usepackage{amssymb,mathtools}
\usepackage{booktabs,array}
\usepackage{enumitem}
\usepackage{needspace}
\usepackage{xurl}
\usepackage[hidelinks]{hyperref}
\hypersetup{pdftitle={Homotopical Nilpotency and Homology Nilpotency: A Dimension-Connectivity Bound},pdfauthor={Paul-Eugène Parent}}
\setlist{itemsep=0.2em,topsep=0.4em,leftmargin=2em}
\newcommand{\Q}{\mathbb Q}
\newcommand{\mm}{\mathfrak m}
\newcommand{\nilh}{\operatorname{nil}_{h}}
\newcommand{\Hnil}{\operatorname{Hnil}}
\newcommand{\cupL}{\operatorname{cup}}
\newcommand{\cdim}{\operatorname{cd}_{\Q}}

\newcommand{\Span}{\operatorname{span}_{\Q}}
\newcommand{\Sym}{\operatorname{Sym}}
\newcommand{\ideal}{\mathrel{\triangleleft}}
\newcommand{\qis}{\xrightarrow{\simeq}}
\newcommand{\sqis}{\xrightarrow[\simeq]{\twoheadrightarrow}}

\newtheorem{theorem}{Theorem}[section]
\newtheorem{proposition}[theorem]{Proposition}
\newtheorem{lemma}[theorem]{Lemma}
\newtheorem{corollary}[theorem]{Corollary}
\theoremstyle{definition}
\newtheorem{definition}[theorem]{Definition}
\theoremstyle{remark}
\newtheorem{remark}[theorem]{Remark}
\newtheorem{example}[theorem]{Example}
\numberwithin{equation}{section}
\title[Homotopical and homology nilpotency]{Homotopical Nilpotency and Homology Nilpotency: A Dimension--Connectivity Bound}
\author[Paul-Eugène Parent]{Paul-Eugène Parent\textsuperscript{*}}
\address{Department of Mathematics and Statistics, Faculty of Science,
University of Ottawa, STEM Complex, Room 361, 150 Louis-Pasteur Private,
Ottawa, Ontario K1N 6N5, Canada}
\email{pparent@uottawa.ca}
\thanks{\textsuperscript{*}This research was supported by the Natural Sciences and Engineering Research Council of Canada (NSERC), grant no.~149521.}
\date{September 6, 2026}
\keywords{rational homotopy, Sullivan minimal model, homotopical nilpotency, homology nilpotency, acyclic differential ideal}

\begin{document}
\begin{abstract}
Let $X$ be a $(q-1)$-connected rational space of finite type, where $q\ge2$, with homotopical nilpotency $\nilh(X)=n\ge1$. If $H^{>N}(X;\Q)=0$ and $N\le q(n+3)-3$, we prove that $\Hnil(X)=n$. More precisely, the sufficient bound is $N\le a_n(M_X)+2q-3$, where $a_n(M_X)$ is the first nonzero internal cohomology degree of $(M_X^+)^{n+1}$ in the minimal model. A family of examples proves that the constant $-3$ in this refined bound is optimal; optimality of the uniform bound is not asserted. The proof uses two stages of adjoining primitives to an ideal of the fixed minimal model.
\end{abstract}

\maketitle

\section{Introduction}
\label{sec:intro}

This paper constructs short quotients of minimal Sullivan algebras from the existence of short models. The construction enlarges a differential ideal inside the fixed minimal algebra, and a dimension--connectivity estimate ensures that the resulting ideal is acyclic. It yields an equality criterion for homotopical and homology nilpotency, together with a refined bound whose constant is optimal.

Let $M=(\Lambda V,d)$ be a simply connected minimal Sullivan algebra of finite type. An augmented commutative differential graded algebra, or CDGA, is called \emph{$n$-short} if the $(n+1)$st power of its augmentation ideal is zero. The \emph{homotopical nilpotency} $\nilh(M)$ is the least $n$ for which the quasi-isomorphism type of $M$ contains an $n$-short CDGA. For a simply connected rational space $X$ of finite type with minimal model $M$, Cornea identifies $\nilh(M)$ with the rational cone-length of $X$ \cite{CorneaRational}.

The invariant $\Hnil(M)$ requires an $n$-short quotient of $M$. It is the least $n$ for which an acyclic differential ideal $J$ of $M$ contains $(M^+)^{n+1}$, or equivalently, for which there is a surjective quasi-isomorphism
\[
 M\sqis B,\qquad (B^+)^{n+1}=0.
\]
This is Carrasquel-Vera's homology nilpotency of an ideal \cite[Section~4]{CarrasquelVera2015}, applied to the augmentation ideal of the minimal model. Every short quotient is a short model, so $\nilh(M)\le\Hnil(M)$. Equality asks whether the shortest multiplication bound available in the rational homotopy type can be realized by a quotient of its minimal model.

The author's monograph \cite[Chapters~4, 11--12]{Parent2026} develops acyclic-ideal methods and establishes the separation $\nilh=3<4=\Hnil$. It also proves equality when $\nilh\le2$ \cite[Corollary~6.7]{Parent2026}. The present contribution concerns the control of products of primitives: a relative two-stage construction uses a comparison quasi-isomorphism to extend the range in which the required acyclic ideal can be built. This gives a uniform equality criterion, of particular interest from length three onward, and a refinement whose constant is optimal.

Connectivity is expressed by $V^{<q}=0$, and the dimension condition is $H^{>N}(M)=0$. We assume neither formality nor coformality nor Poincaré duality. The use of dimension and connectivity to improve categorical coverings in the work of Clapp and Puppe \cite{CP87} inspires the question studied here.

\Needspace{18\baselineskip}\subsection{Main result}

\begin{theorem}[Uniform dimension--connectivity bound]\label{thm:main}
Let $M=(\Lambda V,d)$ be a minimal Sullivan algebra over $\Q$, of finite type. Suppose that
\[
 V^{<q}=0,\qquad q\ge2,\qquad N\ge0,\qquad H^{>N}(M)=0.
\]
Assume that $\nilh(M)=n$ for an integer $n\ge1$. Then
\begin{equation}\label{eq:main}
 \boxed{\quad N\le q(n+3)-3
 \quad\Longrightarrow\quad\Hnil(M)=n.\quad}
\end{equation}
More precisely, there is an acyclic differential ideal $J\ideal M$ containing $(M^+)^{n+1}$ and $M^{>N}$. The quotient $M\sqis M/J$ is therefore $n$-short and finite-dimensional as a vector space.
\end{theorem}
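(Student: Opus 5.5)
We aim to build the acyclic ideal $J$ in two stages inside the fixed minimal model $M=(\Lambda V,d)$, starting from a short model. Since $\nilh(M)=n$, there is an $n$-short CDGA $A$ quasi-isomorphic to $M$. Because $M$ is minimal, hence cofibrant, we may take a comparison quasi-isomorphism $\varphi\colon M\to A$; replacing $A$ by its truncation if needed, $A^{>N}$ is acyclic above degree $N$. The key observation is that $\varphi$ kills $(M^+)^{n+1}$, since $\varphi((M^+)^{n+1})\subset (A^+)^{n+1}=0$. Hence $I_0:=\ker\varphi$ contains $(M^+)^{n+1}$, but it need not be acyclic. Its cohomology is controlled by the long exact sequence of $0\to I_0\to M\to \operatorname{im}\varphi\to 0$. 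The defect is concentrated in degrees where $\operatorname{im}\varphi\hookrightarrow A$ fails to be a quasi-isomorphism.

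Rather than using $\ker\varphi$ directly, we take the differential ideal $I_1$ generated by $(M^+)^{n+1}$. We then adjoin primitives in two stages, following the paper's description. A class $[z]\in H(I_1)$ with $z\in (M^+)^{n+1}$ maps to zero in $H(A)$, hence to zero in $H(M)$. So $z=dw$ for some $w\in M$, and we adjoin $w$ to the ideal. The first stage adds primitives of the cocycles in $(M^+)^{n+1}$, which start in degree $a_n:=a_n(M)\ge (n+1)q$. The new ideal $I_2$ then has cohomology only coming from products $w\cdot m$ with $m\in M^+$, together with new cocycles among the primitives. These products live in degrees $\ge a_n-1+q$. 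In the second stage we adjoin primitives of those, which lie in degrees $\ge a_n+2q-2$. We also add $M^{>N}$ together with $d(M^{N})$, which is acyclic modulo $H^{>N}(M)=0$. The residual cohomology of the resulting ideal $J$ is then pushed into degrees $\ge a_n+2q-2 \ge N+1$, using the hypothesis $N\le a_n+2q-3$. There it is killed by the truncation part $M^{>N}$, since $H^{>N}(M)=0$ makes $M^{>N}+dM^{N}$ absorb it.

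The uniform bound follows from $a_n\ge q(n+1)$, since then $a_n+2q-3\ge q(n+3)-3$. Acyclicity of $J$ gives the surjective quasi-isomorphism $M\to M/J$. The quotient is $n$-short because $(M^+)^{n+1}\subset J$, and finite-dimensional because $M^{>N}\subset J$ and $M$ has finite type. Hence $\Hnil(M)\le n$, and equality follows from $\nilh\le\Hnil$.

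The main obstacle is the second stage: we must show that the products of first-stage primitives with $M^+$ produce no cohomology of $J$ below degree $N+1$ that cannot be killed inside $M$. This is exactly where the comparison map $\varphi$ is needed. We would first try to choose the primitives $w$ so that $\varphi(w)$ is a cocycle in $A$, using the vanishing $\varphi(z)=0$ and the $n$-shortness of $A$. Then the degree-$(a_n+q-1)$ obstruction classes map to zero in $H(A)\cong H(M)$, which lets us choose primitives in $M$ again. Keeping careful track of degree bookkeeping, so that only one further iteration is needed below $N$, is the delicate point.
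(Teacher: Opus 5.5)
Your outline has the right architecture: two stages of primitives inside the fixed $M$, followed by truncation. However, the step you flag as the main obstacle is exactly where the argument is missing, and the fix you suggest does not work. For a primitive $w$ of a cocycle $z\in\mm_M^{n+1}$ you automatically have $d\varphi(w)=\varphi(z)=0$, so ``choosing $w$ with $\varphi(w)$ a cocycle'' gives nothing. The $n$-shortness of $A$ does not help here either.

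What must be shown is that every cocycle $y=v+\sum_k m_kw_k$ of the first-stage ideal, with $v\in\mm_M^{n+1}$ and degree at most $N$, maps to zero in $H(A)$. Two ingredients are needed.
\begin{enumerate}
\item Normalize each $w$ by subtracting a cocycle $c\in M^{|w|}$ with $[\varphi(c)]=[\varphi(w)]$, using surjectivity of $H(\varphi)$. Then $\varphi(w)=d\beta$ is a boundary, and $dw$ is unchanged.
\item Boundaries of $A$ do not form an ideal: $\varphi(m)\,d\beta$ is guaranteed to be a boundary only when $d\varphi(m)=0$. This is where the hypothesis $N\le a_n+2q-3$ enters a second time. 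If $|mw|\le N$, then $|m|\le N-a_n+1\le 2q-2$. Minimality with $V^{<q}=0$ gives $d(M^{\le 2q-2})=0$, since decomposables start in degree $2q$.
\end{enumerate}
With both, $\varphi$ sends all of $J_1^{\le N}$ to boundaries, so $H^{\le N}(J_1)\to H^{\le N}(M)$ vanishes and the second-stage primitives exist.

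Your bookkeeping uses the bound only for second-stage products, and your sentence garbles even that: the second-stage primitives have degree at least $a_n+q-2$, and it is their products that start at $a_n+2q-2$. The same degree $a_n+2q-2$ is also the first place where a first-stage primitive can meet a non-closed coefficient. That coincidence is why the proof stops at two stages.

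Two smaller points. First, the claim that only products and ``new cocycles among the primitives'' create cohomology must be handled explicitly. Adjoin primitives for a basis of $H^s$ of the ideal; then independence shows no nonzero combination of them is closed, and $J_1$ has no cohomology in degrees $\le N$ below $a_n+q-1$. Second, your truncation step fails as stated. We have $d(M^N)\subseteq M^{>N}$ already, and $H^{N+1}(M^{>N})=Z^{N+1}(M)$ is typically nonzero. Instead, adjoin to $J_2+M^{>N}$ elements of $M^N$ whose differentials represent a basis of $H^{N+1}(J_2+M^{>N})$. These span an ideal together with $J_2+M^{>N}$, because their products with $M^+$ lie above $N$. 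Independence shows that no new class appears in degree $N$, and $H^{>N}(M)=0$ handles the higher degrees.
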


The elementary truncation bound in \cite[Proposition~4.4]{Parent2026} already gives $\Hnil(M)\le n$ when $N<q(n+1)$: all words of length $n+1$ then lie above the top cohomological degree. Theorem~\ref{thm:main} uses the short-model hypothesis to extend this bound by $2q-2$ degrees. In this additional range, the long-word ideal can have nonzero internal cohomology, and its elimination requires control of the products forced by adjoining primitives.

For $n=3$, the bound is $N\le6q-3$. In particular, it gives equality in the simply connected case through cohomological dimension $9$, and in the $2$-connected case through cohomological dimension $15$.

The theorem follows from a more precise statement. Set
\[
 a_n(M)=\inf\{j\mid H^j((M^+)^{n+1})\ne0\}.
\]
This number measures the first nonzero \emph{internal} cohomology of the long-word ideal, not the image of that cohomology in $H(M)$. We shall prove that
\begin{equation}\label{eq:refined-intro}
 N\le a_n(M)+2q-3
 \quad\Longrightarrow\quad\Hnil(M)=n.
\end{equation}
The uniform bound follows from $a_n(M)\ge q(n+1)$. Theorem~\ref{thm:refined} gives the corresponding upper bound at an arbitrary level $n$.

Two families clarify the scope of the result. The first shows that cohomological injectivity of the projection onto short words, measured by the Toomer invariant, cannot replace the hypothesis on $\nilh$ in the new range. The second regrades and truncates the separating pattern of \cite[Chapters~11--12]{Parent2026}. Its new role is to test the exact endpoint of the refined bound: the proof controls every possible correction to the critical primitive. For every odd $q\ge3$, it gives a minimal model $M_q$ with
\[
 \cdim M_q=7q-3=a_3(M_q)+2q-2,
 \qquad \nilh(M_q)=3<4=\Hnil(M_q).
\]
Thus equality fails at the first integer beyond the refined bound, proving that its constant $-3$ cannot be replaced universally by $-2$. These models do not establish the optimality of the uniform bound~\eqref{eq:main}.

\subsection{The construction}

The obstruction is internal to the ideal. Making its cocycles exact in $M$ does not suffice: adjoining their primitives to an ideal also forces their products into that ideal, and these products can create new cohomology. Theorem~\ref{thm:two} controls this process for a differential ideal annihilated by a quasi-isomorphism, under explicit degree hypotheses. The application to short models takes $I=(M^+)^{n+1}$.

Choose a quasi-isomorphism $f:M\to B$ to an $n$-short witness; surjectivity of $f$ is not required. Since $f(I)=0$, the internal classes of $I$ have primitives in $M$. Surjectivity of $H(f)$ allows these primitives to be chosen so that their images are boundaries in $B$. After they are adjoined to $I$, the coefficients of their products through degree $N$ have degree at most $2q-2$ and are closed by minimality. The resulting cycles therefore still map to boundaries in $B$ and remain exact in $M$.

A second family of primitives kills the remaining classes through degree $N$. Its products with positive-degree coefficients start in degree $a_n(M)+2q-2$, above the allowed top cohomological degree. An acyclic completion then produces the desired ideal in all degrees. Every primitive belongs to the original algebra, whose multiplication, differential and minimality are preserved throughout.

Sections~\ref{sec:completion} and~\ref{sec:two} establish the ideal constructions, and Section~\ref{sec:main-proof} applies them to the nilpotency invariants. The two families of examples follow. We use the standard existence and comparison results for Sullivan models from \cite[Chapters~12 and~14]{FHT}; the extension from a prescribed minimal prefix is proved in the appendix.

\section{Conventions and elementary comparisons}
\label{sec:conventions}

All CDGAs are defined over $\Q$, augmented, cohomologically graded in nonnegative degrees, and equipped with a differential of degree $+1$. Unless otherwise stated, they are connected, meaning that $A^0=\Q$. Commutativity is graded: $uv=(-1)^{|u||v|}vu$. We write
\[
 \mm_A=A^+=\ker(A\to\Q),\qquad
 Z^j(A)=\ker(d:A^j\to A^{j+1}),\qquad
 B^j(A)=d(A^{j-1}).
\]
An ideal is always graded; it is \emph{differential} if it is stable under $d$, and \emph{acyclic} if its cohomology as a nonunital complex vanishes.

All finite degree bounds and nilpotency levels are integers. We use $\inf\varnothing=+\infty$ for the initial-degree invariants below.

For a minimal algebra $M=(\Lambda V,d)$, the condition $V^{<q}=0$ implies
\begin{equation}\label{eq:degree-estimate}
 M^j=0\ (1\le j<q),\qquad
 (\mm_M^r)^j=0\ (j<qr),\qquad
 d(\mm_M^r)\subseteq\mm_M^{r+1}.
\end{equation}
The last inclusion follows from the decomposability of $dV$ and the Leibniz rule. The word \emph{length} always refers to augmentation length, not cohomological degree.

\begin{definition}\label{def:invariants}
For a simply connected minimal algebra $M$, set
\begin{align*}
 \nilh(M)&=\inf\{n\ge0\mid M\simeq B,\ (B^+)^{n+1}=0\},\\
 \Hnil(M)&=\inf\{n\ge0\mid \exists J\ideal M,\ H(J)=0,
                   \ (M^+)^{n+1}\subseteq J\}.
\end{align*}
The symbol $\simeq$ denotes a zigzag of augmented quasi-isomorphisms. The infimum of an empty set is $+\infty$.
\end{definition}
The second invariant is Carrasquel-Vera's homology nilpotency of the ideal $M^+$ in the ambient CDGA $M$ \cite[Section~4]{CarrasquelVera2015}.

\begin{lemma}[Choosing a direct witness]\label{lem:direct}
If $\nilh(M)\le n$, there is an augmented quasi-isomorphism $f:M\to B$ with $(B^+)^{n+1}=0$. No surjectivity of $f$ on cochains is asserted.
\end{lemma}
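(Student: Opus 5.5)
The plan is to turn the zigzag in the definition of $\nilh(M)$ into a single map out of $M$, using the fact that $M$ is a minimal Sullivan algebra and hence cofibrant. By hypothesis there is a zigzag of augmented quasi-isomorphisms
\[
 M=C_0\leftrightarrow C_1\leftrightarrow\cdots\leftrightarrow C_k=B,\qquad (B^+)^{n+1}=0.
\]
The standard lifting lemma for Sullivan algebras \cite[Chapters~12 and~14]{FHT} states that if $\phi:A\to A'$ is a quasi-isomorphism of CDGAs and $(\Lambda W,d)$ is a Sullivan algebra, then composition with $\phi$ induces a bijection on homotopy classes of morphisms $(\Lambda W,d)\to A\to A'$. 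I will apply it inductively along the zigzag: the goal at each stage is a quasi-isomorphism $g_i:M\to C_i$.

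We start with $g_0=\mathrm{id}$. Suppose the arrow points forward, $C_i\to C_{i+1}$. Then we take $g_{i+1}$ to be the composite, which is a quasi-isomorphism because it is a composite of quasi-isomorphisms. Suppose instead the arrow points backward, $\phi:C_{i+1}\to C_i$ a quasi-isomorphism. Surjectivity of $[M,C_{i+1}]\to[M,C_i]$ then gives $g_{i+1}:M\to C_{i+1}$ with $\phi g_{i+1}\sim g_i$. Homotopic maps induce the same map in cohomology, so $H(\phi)H(g_{i+1})=H(g_i)$ is an isomorphism. Since $H(\phi)$ is also an isomorphism, $H(g_{i+1})$ is an isomorphism. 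At the end we set $f=g_k:M\to B$.

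The only delicate point is the augmentation. We must check that the lift can be chosen augmented, and that the lifting lemma applies in the augmented (pointed) setting. The first thing to try is the connectedness of $M$ and $B$. If $B^0=\Q$, every CDGA map $M\to B$ automatically respects augmentations, because $M^+$ is concentrated in positive degrees. It then maps into $B^{>0}=B^+$. The same argument applies to each intermediate $C_i$ if these are connected.

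If some $C_i$ is not connected, I would pass to the augmented lifting lemma, phrased for augmented CDGAs with homotopies through augmented maps. The proof of that lemma, by induction over a Sullivan basis of $V$, goes through verbatim with augmentation ideals in place of the full algebras. Alternatively, one can compose with the augmentation-preserving correction $x\mapsto x-\varepsilon(x)$ on generators of positive degree, which is harmless since $V=V^{\ge2}$.

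Finally, $f$ is a quasi-isomorphism into an $n$-short algebra, and no claim is made about surjectivity on cochains, so the lemma follows.
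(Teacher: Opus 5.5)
Your proof is correct and follows the paper's route: the paper cites the zigzag invariance of Sullivan models (FHT, Proposition~12.9 and Section~12(c)), which is exactly the inductive lifting along the zigzag that you write out, and it gets the augmentation from connectedness as you do. Your fallback for non-connected intermediate algebras is unnecessary: an augmentation to $\Q$ vanishes in positive degrees, so every morphism out of the connected algebra $M$ is automatically augmented, and only the final map $f$ needs this property anyway.
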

\begin{proof}
Choose an $n$-short witness $B$. Sullivan models are invariant under zigzags of quasi-isomorphisms \cite[Proposition~12.9, pp.~153--154, and Section~12(c), p.~156]{FHT}; hence the given minimal algebra $M$ admits a quasi-isomorphism to $B$. It is augmented, since both algebras are connected and the morphism preserves degrees. No surjectivity on cochains is involved.
\end{proof}

\begin{lemma}[Quotient characterization]\label{lem:quotient}
The condition $\Hnil(M)\le n$ is equivalent to the existence of an augmented surjective quasi-isomorphism $M\sqis C$ such that $(C^+)^{n+1}=0$. In particular,
\begin{equation}\label{eq:nil-comparison}
 \nilh(M)\le\Hnil(M).
\end{equation}
\end{lemma}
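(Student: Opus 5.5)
The plan is to prove both directions of the equivalence directly from Definition~\ref{def:invariants}, and then to deduce \eqref{eq:nil-comparison}.

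\emph{From an ideal to a quotient.} Suppose $J\ideal M$ is an acyclic differential ideal with $(M^+)^{n+1}\subseteq J$. Set $C=M/J$, with quotient map $p:M\to C$. This map is surjective and is a CDGA morphism, because $J$ is a differential ideal. Now use the short exact sequence of complexes $0\to J\to M\to C\to0$. Its long exact cohomology sequence together with $H(J)=0$ shows that $H(p)$ is an isomorphism. Next, $J\subseteq M^+$: we have $J^0\subseteq M^0=\Q$, and if $J^0$ contained $1$ then $J=M$, which is not acyclic since $H^0(M)=\Q$. Hence $C^0=\Q$, so $C$ is connected, augmented, and $p$ is augmented. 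Finally $C^+=p(M^+)$, so $(C^+)^{n+1}=p\bigl((M^+)^{n+1}\bigr)\subseteq p(J)=0$.

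\emph{From a quotient to an ideal.} Suppose $g:M\sqis C$ is an augmented surjective quasi-isomorphism with $(C^+)^{n+1}=0$. Take $J=\ker g$. It is a differential ideal. The same short exact sequence, now with $H(g)$ an isomorphism, gives $H(J)=0$. Since $g$ is augmented and surjective, $g(M^+)=C^+$, and therefore $g\bigl((M^+)^{n+1}\bigr)=(C^+)^{n+1}=0$, that is, $(M^+)^{n+1}\subseteq J$.

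\emph{The comparison.} Every such quotient $C$ is in particular an $n$-short CDGA quasi-isomorphic to $M$ by a single augmented map. So $\Hnil(M)\le n$ implies $\nilh(M)\le n$, and taking infima gives \eqref{eq:nil-comparison}. This holds trivially when $\Hnil(M)=+\infty$.

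I do not expect a genuine obstacle here. The only point that needs care is the degree-zero and augmentation bookkeeping: that $J$ lies in $M^+$, so that the quotient stays connected, and that surjectivity gives $g(M^+)=C^+$ and not merely an inclusion.
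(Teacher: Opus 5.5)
Your proof is correct and follows the paper's argument: the short exact sequence $0\to J\to M\to M/J\to 0$ in both directions, then a short quotient serving as a witness for $\nilh$. The degree-zero and augmentation bookkeeping ($J^0=0$, so $M/J$ stays connected, and $p(M^+)=C^+$) makes explicit what the paper leaves implicit; in the converse direction, only the inclusion $g(M^+)\subseteq C^+$ is actually needed, not equality.
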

\begin{proof}
If $J$ is an ideal as in the definition, the short exact sequence of complexes
\[
 0\longrightarrow J\longrightarrow M\longrightarrow M/J\longrightarrow0
\]
shows that $M\to M/J$ is a quasi-isomorphism. The $(n+1)$st power of the augmentation ideal of the quotient is zero. Conversely, the kernel of a surjective quasi-isomorphism is acyclic by the same exact sequence. If its target is $n$-short, it contains $(M^+)^{n+1}$. Finally, a short quotient is an admissible witness for $\nilh$.
\end{proof}

An isomorphism of minimal CDGAs transports the augmentation ideal, its powers, and the acyclic ideals. The invariants defined above therefore do not depend on the choice of coordinates in the minimal model. For a simply connected rational space $X$ with minimal model $M_X$, we write $\nilh(X)=\nilh(M_X)$ and $\Hnil(X)=\Hnil(M_X)$.

\begin{definition}\label{def:internal}
The Toomer invariant is defined by
\[
 e_0(M)=\inf\{n\ge0\mid H(M\to M/\mm_M^{n+1})
                      \text{ is injective}\}.
\]
For $n\ge1$, we also set
\begin{align}
 \nu_{n+1}(M)&=\inf\{j\mid(\mm_M^{n+1})^j\ne0\},\label{eq:nu}\\
 a_n(M)&=\inf\{j\mid H^j(\mm_M^{n+1})\ne0\}.\label{eq:a}
\end{align}
The cohomological dimension is $\cdim M=\sup\{j\mid H^j(M)\ne0\}$.
\end{definition}

We write $\cupL H(M)$ for the cup-length: the supremum of the lengths of nonzero products of positive-degree cohomology classes, with value $0$ if $H^+(M)=0$ and $+\infty$ if these lengths are unbounded.

\begin{proposition}\label{prop:toomer}
The condition $e_0(M)\le n$ is equivalent to the vanishing of the map $H(\mm_M^{n+1})\to H(M)$. Moreover,
\begin{equation}\label{eq:toomer-chain}
 \cupL H(M)\le e_0(M)\le\nilh(M)\le\Hnil(M).
\end{equation}
Under the assumption $V^{<q}=0$, we have
\begin{equation}\label{eq:a-bound}
 q(n+1)\le\nu_{n+1}(M)\le a_n(M).
\end{equation}
\end{proposition}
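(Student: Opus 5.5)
The plan is to handle the three assertions in turn, each reducing to a long exact sequence, a short chain of implications, or a degree count.

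\emph{The Toomer characterization.} Consider the short exact sequence of complexes
\[
 0\longrightarrow \mm_M^{n+1}\longrightarrow M\longrightarrow M/\mm_M^{n+1}\longrightarrow 0 .
\]
Here $\mm_M^{n+1}$ is a differential ideal, since $d(\mm_M^r)\subseteq\mm_M^{r}$ (indeed $\subseteq\mm_M^{r+1}$ by \eqref{eq:degree-estimate}). In the associated long exact sequence, the kernel of $H(M)\to H(M/\mm_M^{n+1})$ is the image of $H(\mm_M^{n+1})\to H(M)$. So injectivity at level $n$ is equivalent to vanishing of that map, which is the first claim.

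\emph{The chain \eqref{eq:toomer-chain}.} I would prove the three inequalities separately.
\begin{enumerate}
\item $\cupL H(M)\le e_0(M)$. If $e_0(M)\le n$ and $[x_1],\dots,[x_k]$ are positive-degree classes with $k\ge n+1$, choose cocycle representatives $x_i\in\mm_M$. Then $x_1\cdots x_k$ is a cocycle in $\mm_M^{n+1}$. By the first part its class vanishes in $H(M)$, so every product of length $\ge n+1$ is zero and $\cupL\le n$. The case $e_0=\infty$ is trivial.
\item $e_0(M)\le\nilh(M)$. Take $n=\nilh(M)$ and use Lemma~\ref{lem:direct} to get a quasi-isomorphism $f:M\to B$ with $(B^+)^{n+1}=0$. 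Since $f$ is augmented, $f(\mm_M^{n+1})\subseteq (B^+)^{n+1}=0$. Hence for a cocycle $z\in\mm_M^{n+1}$ we get $H(f)[z]=0$, and injectivity of $H(f)$ gives $[z]=0$ in $H(M)$. By the first part, $e_0\le n$.
\item $\nilh(M)\le\Hnil(M)$. This is \eqref{eq:nil-comparison} from Lemma~\ref{lem:quotient}.
\end{enumerate}

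\emph{The degree bounds \eqref{eq:a-bound}.} The inequality $q(n+1)\le\nu_{n+1}(M)$ is just \eqref{eq:degree-estimate}: $(\mm_M^{n+1})^j=0$ for $j<q(n+1)$. Likewise $\nu_{n+1}\le a_n$ holds because cohomology in degree $j$ requires nonzero cochains in degree $j$; if the ideal vanishes below $\nu_{n+1}$, so does its cohomology. Both infima are taken with the convention $\inf\varnothing=+\infty$, which keeps the statements consistent when the sets are empty.

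There is no real obstacle here. The only point needing care is that each witness is augmented, so that the ideal of long words maps into the long words of the target; Lemma~\ref{lem:direct} supplies exactly this.
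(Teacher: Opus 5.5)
Your proof is correct and follows the paper's argument. The long exact sequence is a packaged form of the paper's direct chase (a cocycle of $M$ that becomes a boundary modulo $\mm_M^{n+1}$ differs from a boundary by a cocycle of the ideal), and your cup-length, direct-witness, and degree-count steps match the paper's. Like the paper, you tacitly use that injectivity at level $m$ implies injectivity at every level $n\ge m$, since the projection to $M/\mm_M^{m+1}$ factors through $M/\mm_M^{n+1}$; this is what turns ``injective at level $n$'' into ``$e_0(M)\le n$''.
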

\begin{proof}
Set $I=\mm_M^{n+1}$. If the projection is injective on cohomology, every cocycle of $I$ is exact in $M$. Conversely, suppose this exactness holds. If a cocycle $z$ of $M$ becomes a boundary in $M/I$, there is $v\in M$ such that $z-dv\in I$. This element is closed and hence exact in $M$; therefore $z$ is exact.

A quasi-isomorphism to an $n$-short target annihilates $I$. Lemma~\ref{lem:direct} therefore gives $e_0\le\nilh$. If a product of $r$ positive-degree classes is nonzero, the product of closed representatives belongs to $\mm_M^r$ and represents an essential class. Its projection to $M/\mm_M^r$ is zero, so $e_0(M)\ge r$. The last inequality in~\eqref{eq:toomer-chain} is~\eqref{eq:nil-comparison}. Finally,~\eqref{eq:degree-estimate} gives the first inequality in~\eqref{eq:a-bound}; the second is immediate.
\end{proof}

\begin{remark}\label{rem:internal}
The hypothesis $\nilh(M)\le n$ kills the image of $H(I)$ in $H(M)$, but it need not kill $H(I)$ itself. A cocycle may be a boundary in $M$ without having a primitive in $I$. The number $a_n$ measures exactly the first degree in which this internal phenomenon can occur. Likewise, an essential cycle of length $r$ bounds $e_0$ below by $r$ even when its factors are not individually closed: this observation is more general than a lower bound obtained from cup products.
\end{remark}

\section{Internal primitives and acyclic completion}
\label{sec:completion}

The constructions in this section refine the acyclic-ideal methods of \cite[Chapter~4]{Parent2026}. The completion lemma is a relative form of the standard finite-dimensional truncation construction; compare \cite[Section~12(a), Example~6, pp.~146--147]{FHT}. We give the proofs to make the degree estimates and the later two-stage argument self-contained.

\begin{lemma}[Completion above the top cohomological degree]\label{lem:completion}
Let $A$ be a connected CDGA of finite type, with $H^{>N}(A)=0$, $N\ge1$. If a differential ideal $J_0\ideal A$ satisfies $H^j(J_0)=0$ for $j\le N$, there is an acyclic differential ideal $J$ such that
\[
 J_0+A^{>N}\subseteq J.
\]
\end{lemma}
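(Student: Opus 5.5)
The plan is to take $J = J_0 + A^{>N} + W$, where $W\subseteq A^N$ is a complement chosen so that the cocycles of degree $N$ that do not come from $J_0$ are absorbed into $J$. The construction is the standard truncation of a finite-dimensional model above its top cohomological degree, done relative to $J_0$.

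\textbf{Setting up the candidate.} First note that $K := J_0 + A^{>N}$ is already a differential ideal. Indeed $A^{>N}$ is an ideal because $A$ is nonnegatively graded, and it is stable under $d$ because $d$ raises degree. Since $K$ agrees with $J_0$ in degrees $\le N$, we have $H^j(K)=H^j(J_0)=0$ for $j<N$, and $H^j(K)=H^j(A)=0$ for $j>N+1$. The only problems occur in degrees $N$ and $N+1$.
\begin{itemize}
\item In degree $N$, $H^N(K)=H^N(J_0)=0$ by hypothesis.
\item In degree $N+1$, the cocycles of $K$ are $Z^{N+1}(A)=B^{N+1}(A)=d(A^N)$, using $H^{N+1}(A)=0$. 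Only $d(J_0^N)$ is hit, so $H^{N+1}(K)\cong d(A^N)/d(J_0^N)$, which may be nonzero.
\end{itemize}

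\textbf{The correction.} Choose a subspace $W\subseteq A^N$ with $A^N = (Z^N(A)+J_0^N)\oplus W$, and set
\[
J = J_0 + A^{>N} + W.
\]
We need three things.
\begin{itemize}
\item $J$ is an ideal. For $a\in A^+$, $aW\subseteq A^{>N}$, and $\Q\cdot W=W$, so $J$ is closed under multiplication by $A$.
\item $J$ is a differential ideal, since $dW\subseteq A^{N+1}\subseteq J$.
\item $J$ is acyclic. In degree $N+1$ we have $d(J^N)=d(J_0^N)+d(W)=d(A^N)$, because $d$ kills $Z^N(A)$, so every cocycle is a boundary. In degree $N$, let $z=x+w$ with $x\in J_0^N$, $w\in W$ and $dz=0$. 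Then $w=z-x\in Z^N(A)+J_0^N$, which forces $w=0$. Hence $z=x$ is a cocycle of $J_0$, and therefore $z\in d(J_0^{N-1})$. Degrees $<N$ and $>N+1$ were handled in the previous step.
\end{itemize}
Finite type is not really needed beyond ensuring that complements exist, and they always exist over a field.

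The main subtlety is degree $N$. Adding all of $A^N$ would create new cocycles: elements of $Z^N(A)$ representing nonzero classes of $H^N(A)$, which has no reason to vanish. The complement $W$ must therefore avoid $Z^N(A)$ modulo $J_0^N$. At the same time, it must be large enough that $d(J^N)=d(A^N)$ so that degree $N+1$ is killed. Taking $W$ to be a complement of $Z^N(A)+J_0^N$ achieves both, and the hypothesis $H^N(J_0)=0$ handles the cocycles that already lie in $J_0$. The assumption $N\ge1$ guarantees that $A^N\subseteq A^+$, so that $J$ lies in the augmentation ideal and stays proper.
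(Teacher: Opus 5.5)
Your proof is correct and is essentially the paper's: a complement $W$ of $Z^N(A)+J_0^N$ in $A^N$ is exactly a span of primitives $u_i$ of a basis of $H^{N+1}(K)\cong d(A^N)/d(J_0^N)$, and these are the elements the paper adjoins to $K=J_0+A^{>N}$. The only difference is in the verification: the paper obtains acyclicity from the long exact sequence of $0\to K\to J\to J/K\to0$, while you check degrees $N$ and $N+1$ directly.
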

\begin{proof}
Set $K=J_0+A^{>N}$. Since degrees are nonnegative, this is a differential ideal. For $j\le N$, its cochains and boundaries are those of $J_0$. In degree $N$, enlarging the target component of the differential does not change its kernel on $J_0^N$: being closed still means mapping to zero in $A^{N+1}$. Consequently, $H^{\le N}(K)=0$.

Choose cocycles $z_1,\ldots,z_t\in K^{N+1}$ representing a basis of $H^{N+1}(K)$. They are exact in $A$, since $H^{N+1}(A)=0$. Choose $u_i\in A^N$ with $du_i=z_i$ and set
\[
 J=K+\Span\{u_1,\ldots,u_t\}.
\]
This subspace is an ideal: every product $u_iA^+$ lies in $A^{>N}$. It is stable under $d$. The classes of the $u_i$ modulo $K$ are independent. Indeed, a relation $\sum_i\lambda_i u_i\in K$, after applying $d$, would give $\sum_i\lambda_i[z_i]=0$ in $H^{N+1}(K)$, hence $\lambda_i=0$ for every $i$.

Thus $J/K$ is concentrated in degree $N$ and has zero differential. The connecting map of
\[
 0\longrightarrow K\longrightarrow J\longrightarrow J/K\longrightarrow0
\]
sends the class of $u_i$ to that of $z_i$ and is an isomorphism in these degrees. The long exact sequence gives $H^{\le N+1}(J)=0$. If $j>N+1$, every primitive in $A^{j-1}$ of a degree-$j$ cocycle already belongs to $A^{>N}\subseteq J$. Since $H^j(A)=0$, we also have $H^j(J)=0$. The case $t=0$ is included in this argument.
\end{proof}

\begin{corollary}[Degree truncation]\label{cor:degree}
Let $M$ be minimal of finite type, and let $n,N\ge0$. Suppose that $H^{>N}(M)=0$. If $(\mm_M^{n+1})^{\le N}=0$, then $\Hnil(M)\le n$. The short quotient can be chosen to vanish above degree $N$.
\end{corollary}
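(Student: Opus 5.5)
The plan is to apply Lemma~\ref{lem:completion} to $A=M$ with the differential ideal $J_0=\mm_M^{n+1}$. This ideal is differential by~\eqref{eq:degree-estimate}, since $d(\mm_M^{n+1})\subseteq\mm_M^{n+2}\subseteq\mm_M^{n+1}$. By hypothesis $(\mm_M^{n+1})^{\le N}=0$, so $J_0$ vanishes in all degrees $\le N$. Hence $H^j(J_0)=0$ for $j\le N$ trivially; in degree $N$ there are no cochains at all.

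Two degenerate cases need a separate remark. The lemma requires $N\ge1$ and a connected $M$. Minimal Sullivan algebras here are connected with $M^0=\Q$. If $N=0$, then $H^{>0}(M)=0$, so $M\simeq\Q$ and $M^+$ is itself acyclic. In that case we simply take $J=M^+$, which contains every power of $\mm_M$ and gives $\Hnil(M)=0\le n$. Alternatively, the proof of Lemma~\ref{lem:completion} goes through verbatim for $N=0$.

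For $N\ge1$, Lemma~\ref{lem:completion} yields an acyclic differential ideal $J\supseteq\mm_M^{n+1}+M^{>N}$. By Definition~\ref{def:invariants}, or equivalently Lemma~\ref{lem:quotient}, this gives $\Hnil(M)\le n$. The quotient $M/J$ is $n$-short because $J$ contains $(M^+)^{n+1}$. It vanishes above degree $N$ because $J\supseteq M^{>N}$.

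No step is a genuine obstacle. The only point to check is that the vanishing of $J_0$ through degree $N$ really gives $H^{\le N}(J_0)=0$, including in degree $N$ where cocycles are tested against $A^{N+1}$. This is immediate because there are no cochains in that degree.
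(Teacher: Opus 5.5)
Your proposal is correct and follows the paper's proof: for $N\ge1$, apply Lemma~\ref{lem:completion} to $J_0=\mm_M^{n+1}$, whose cohomology vanishes through degree $N$ because the ideal itself vanishes there. For $N=0$, use the acyclic ideal $M^+$, whose quotient is $M\to\Q$.
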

\begin{proof}
For $N\ge1$, apply the lemma to $J_0=\mm_M^{n+1}$. If $N=0$, the ideal $M^+$ is acyclic and the quotient $M\to\Q$ has the required properties.
\end{proof}

The next operation adjoins \emph{elements of the fixed algebra} to an ideal. It adjoins no generators to the algebra, so there is no need to reconstruct and preserve minimality at each step.

\begin{lemma}[First possible degree of new classes]\label{lem:adjoin}
Let $A$ be a connected CDGA with $A^j=0$ for $1\le j<q$, $q\ge2$, and let $I\ideal A$ be a differential ideal such that $H^{<a}(I)=0$, where $a\ge2$. For each $a\le s\le N$, choose cocycles $z_{s,i}$ representing a basis of $H^s(I)$, and elements $u_{s,i}\in A^{s-1}$ with $du_{s,i}=z_{s,i}$. Set
\[
 J_1=I+\sum_{s,i}A u_{s,i},\qquad b=a+q-1.
\]
Then $J_1$ is a differential ideal and
\[
 H^j(J_1)=0\qquad(j\le N,\ j<b).
\]
\end{lemma}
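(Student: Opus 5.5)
The plan is to compare $J_1$ with $I$ through the short exact sequence
\[
 0\longrightarrow I\longrightarrow J_1\longrightarrow J_1/I\longrightarrow0,
\]
after computing $J_1/I$ explicitly in low degrees, in the same way as in the proof of Lemma~\ref{lem:completion}.

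\textbf{$J_1$ is a differential ideal.} It is an ideal by construction. For the differential, take $x\in A$. Then
\[
 d(xu_{s,i})=(dx)u_{s,i}\pm x z_{s,i}.
\]
The first term lies in $Au_{s,i}$. The second lies in $I$, because $z_{s,i}\in I$ and $I$ is an ideal. Since $dI\subseteq I$, we get $dJ_1\subseteq J_1$.

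\textbf{Low-degree structure of $J_1/I$.} Consider a product $xu_{s,i}$ with $x\in A^+$. Then $|x|\ge q$, so
\[
 |xu_{s,i}|\ge q+s-1\ge q+a-1=b.
\]
Hence in degrees $j<b$ the quotient $J_1/I$ is spanned by the classes of the $u_{j+1,i}$. These exist only when $a\le j+1\le N$; otherwise $(J_1/I)^j=0$.

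\textbf{Independence and the induced differential.} The classes of the $u_{j+1,i}$ are linearly independent modulo $I$. Suppose $\sum_i\lambda_iu_{j+1,i}\in I$. Applying $d$ gives $\sum_i\lambda_i z_{j+1,i}\in d(I)$. So $\sum_i\lambda_i[z_{j+1,i}]=0$ in $H(I)$, and the basis property forces every $\lambda_i=0$. Moreover, the induced differential on these classes is zero, since $du_{s,i}=z_{s,i}\in I$. Therefore
\[
 H^j(J_1/I)=(J_1/I)^j\qquad (j<b).
\]
This also holds in degree $j=b-1$, because the differential of each $u$ already vanishes in the quotient.

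\textbf{The long exact sequence.} Fix $j\le N$ with $j<b$, and use
\[
 H^j(I)\longrightarrow H^j(J_1)\longrightarrow H^j(J_1/I)\xrightarrow{\ \delta\ }H^{j+1}(I).
\]
\begin{itemize}
\item The connecting map $\delta$ sends the class of $u_{j+1,i}$ to $[z_{j+1,i}]$. The $[z_{j+1,i}]$ are independent, so $\delta$ is injective. When $j+1>N$ or $j+1<a$, the source $H^j(J_1/I)$ is zero, so injectivity holds trivially.
\item The left map $H^j(I)\to H^j(J_1)$ is zero. If $j<a$, then $H^j(I)=0$ by hypothesis. If $a\le j\le N$, every class of $H^j(I)$ is a combination of the $[z_{j,i}]$, and each $z_{j,i}=du_{j,i}$ with $u_{j,i}\in J_1$.
\end{itemize}
Exactness then gives $H^j(J_1)=0$.

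The main point needing care is the degree bookkeeping that justifies the explicit description of $J_1/I$ up to and including degree $b-1$. The connectivity gap $A^{1\le j<q}=0$ is exactly what keeps products $Au_{s,i}$ out of these degrees. Above degree $b-1$ such products can genuinely create new cohomology, which is why the statement stops at $b$.
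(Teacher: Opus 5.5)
Your proof is correct and is essentially the paper's argument. The paper performs the same computation as a direct cocycle chase: it writes $z=v+\sum_i\lambda_i u_{j+1,i}$, uses the independence of the $[z_{j+1,i}]$ to force $\lambda_i=0$, and then makes $v$ exact in $J_1$ via the $u_{j,i}$. That chase is your long exact sequence for $0\to I\to J_1\to J_1/I\to 0$ unpacked at the level of elements, and both rest on the same degree bookkeeping showing that $(J_1/I)^j$ is spanned by the classes of the $u_{j+1,i}$ for $j<b$.
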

\begin{proof}
Stability under the differential follows from $du_{s,i}\in I$ and the Leibniz rule. Every adjoined primitive has degree at least $a-1$. A product with a positive-degree coefficient therefore starts in degree $b$.

Fix $j\le N$ with $j<b$. Every element $z\in J_1^j$ can be written as
\[
 z=v+\sum_i\lambda_i u_{j+1,i},\qquad v\in I^j,
\]
where the sum is empty if no generator $u_{j+1,i}$ was chosen. If $dz=0$, we obtain
\[
 dv+\sum_i\lambda_i z_{j+1,i}=0.
\]
The independence of the classes $[z_{j+1,i}]$ in $H^{j+1}(I)$ forces all the $\lambda_i$ to vanish. Thus $z=v$ is a cocycle of $I^j$. If $j<a$, it is already exact in $I$. If $a\le j\le N$, there are $w\in I^{j-1}$ and scalars $\mu_i$ such that
\[
 v=dw+\sum_i\mu_i z_{j,i}
   =d\left(w+\sum_i\mu_i u_{j,i}\right).
\]
It is therefore exact in $J_1$. This argument also covers $j=N$, without assuming that a basis in degree $N+1$ has been chosen.
\end{proof}

\begin{proposition}[Elimination before the first products]\label{prop:one}
Under the degree assumptions of Lemma~\ref{lem:adjoin}, suppose that $A$ is of finite type, $N\ge1$, $H^{>N}(A)=0$, $H^{<a}(I)=0$, and $H^s(I)\to H^s(A)$ is zero for $s\le N$. If
\[
 N\le a+q-2,
\]
then $I$ is contained in an acyclic differential ideal of $A$ containing $A^{>N}$.
\end{proposition}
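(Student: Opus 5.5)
The plan is to combine Lemma~\ref{lem:adjoin} with Lemma~\ref{lem:completion}. The hypothesis that $H^s(I)\to H^s(A)$ vanishes for $s\le N$ is exactly what Lemma~\ref{lem:adjoin} needs. For each $s$ with $a\le s\le N$, finite type makes $H^s(I)$ finite-dimensional, so we may choose cocycles $z_{s,i}\in I^s$ representing a basis of $H^s(I)$. Each $z_{s,i}$ is exact in $A$, so there are $u_{s,i}\in A^{s-1}$ with $du_{s,i}=z_{s,i}$. We then set
\[
 J_1=I+\sum_{s,i}Au_{s,i}.
\]
By Lemma~\ref{lem:adjoin}, $J_1$ is a differential ideal of $A$ with $H^j(J_1)=0$ whenever $j\le N$ and $j<b=a+q-1$.

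The degree hypothesis $N\le a+q-2$ says precisely that $N<b$. So the condition $j<b$ is automatic for every $j\le N$, and we obtain $H^{\le N}(J_1)=0$. This is the point where the restriction to the range before the first products enters. The only new elements that are not scalar multiples of primitives are products $xu_{s,i}$ with $x\in A^+$. These have degree at least $q+a-1=b>N$, so they cannot create cohomology in the range where we need vanishing. This is already built into Lemma~\ref{lem:adjoin}, so no further computation is needed.

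Next we apply Lemma~\ref{lem:completion} to the ideal $J_0=J_1$ in $A$. Its hypotheses hold: $A$ is connected and of finite type, $H^{>N}(A)=0$, $N\ge1$, and $H^j(J_1)=0$ for $j\le N$. The lemma produces an acyclic differential ideal $J$ with $J_1+A^{>N}\subseteq J$. Since $I\subseteq J_1$, the ideal $J$ contains $I$ and $A^{>N}$, as required.

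There is no serious obstacle here. The only step needing care is checking the index bookkeeping at the endpoint $j=N$. Lemma~\ref{lem:adjoin} covers this case without needing primitives in degree $N+1$, and Lemma~\ref{lem:completion} only requires vanishing through degree $N$. If $a>N$, no primitives are chosen at all; then $J_1=I$ has $H^{\le N}(I)=0$ directly, and the completion step alone finishes the proof.
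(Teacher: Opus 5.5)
Your proposal is correct and matches the paper's proof: use the vanishing of $H^s(I)\to H^s(A)$ to choose primitives, apply Lemma~\ref{lem:adjoin} with $N<b=a+q-1$ to get $H^{\le N}(J_1)=0$, and finish with Lemma~\ref{lem:completion}. Like the paper, you treat the case $a>N$ separately by applying the completion lemma directly.
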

\begin{proof}
If $a>N$, apply Lemma~\ref{lem:completion} directly. Otherwise, the cohomological vanishing allows the primitives in Lemma~\ref{lem:adjoin} to be chosen. Since $N<a+q-1$, that lemma gives $H^{\le N}(J_1)=0$. Lemma~\ref{lem:completion} completes the proof.
\end{proof}

\begin{corollary}[The first range from Toomer's invariant]\label{cor:toomer}
Let $M$ be minimal of finite type, with $V^{<q}=0$, $q\ge2$, $N\ge0$, and $H^{>N}(M)=0$. For $n\ge1$, we have
\[
 e_0(M)\le n,\qquad N\le a_n(M)+q-2
 \quad\Longrightarrow\quad\Hnil(M)\le n.
\]
In particular, the condition $N\le q(n+2)-2$ is sufficient.
\end{corollary}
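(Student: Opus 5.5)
We apply Proposition~\ref{prop:one} with $A=M$ and $I=\mm_M^{n+1}$. First we dispose of the degenerate case $N=0$. Then $H^+(M)=0$, so $M^+$ is itself acyclic and contains $\mm_M^{n+1}$; hence $\Hnil(M)=0\le n$. So we may assume $N\ge1$.

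Now we check the hypotheses of Proposition~\ref{prop:one} one by one.
\begin{itemize}
\item $A=M$ is connected and of finite type, and $M^j=0$ for $1\le j<q$ by~\eqref{eq:degree-estimate}. We also have $H^{>N}(M)=0$ by assumption.
\item $I=\mm_M^{n+1}$ is a differential ideal, because $d(\mm_M^{r})\subseteq\mm_M^{r+1}$.
\item Set $a=a_n(M)$. By definition $H^{<a}(I)=0$.
\item We need $a\ge2$. By~\eqref{eq:a-bound}, $a\ge q(n+1)\ge 4$ since $q\ge2$ and $n\ge1$.
\item The hypothesis $e_0(M)\le n$ is equivalent, by Proposition~\ref{prop:toomer}, to the vanishing of $H(\mm_M^{n+1})\to H(M)$ in all degrees, in particular for $s\le N$.
\end{itemize}

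If $a=+\infty$, that is, $I$ is acyclic, we are done immediately. Otherwise $a$ is a finite integer, and the degree condition $N\le a+q-2$ is exactly the assumed one. Proposition~\ref{prop:one} then produces an acyclic differential ideal $J\supseteq\mm_M^{n+1}$. This gives $\Hnil(M)\le n$, and Lemma~\ref{lem:quotient} turns $J$ into a short quotient.

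For the final sentence, the inequality $a_n(M)\ge q(n+1)$ from~\eqref{eq:a-bound} shows that $N\le q(n+2)-2=q(n+1)+q-2$ implies $N\le a_n(M)+q-2$.

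There is no real obstacle in this argument. The only points needing care are the edge cases: $N=0$ (excluded by the hypothesis $N\ge1$ of the proposition), $a=\infty$, and confirming that the hypothesis $a\ge2$ of Lemma~\ref{lem:adjoin} holds. All three are handled above.
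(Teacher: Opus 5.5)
Your proposal is correct and matches the paper's proof. Both dispose of the case $H^+(M)=0$ (equivalently $N=0$ in your phrasing) using $M^+$, and both note that $a_n=+\infty$ makes $\mm_M^{n+1}$ itself acyclic. Otherwise both apply Propositions~\ref{prop:toomer} and~\ref{prop:one} to $I=\mm_M^{n+1}$ with $a=a_n(M)$, and obtain the uniform version from~\eqref{eq:a-bound}.
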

\begin{proof}
If $H^+(M)=0$, the ideal $M^+$ suffices. Otherwise, $N\ge1$. If $a_n=+\infty$, the ideal $\mm_M^{n+1}$ is acyclic. In the remaining case, Propositions~\ref{prop:toomer} and~\ref{prop:one} apply to this ideal, with $a=a_n$. Inequality~\eqref{eq:a-bound} gives the uniform version.
\end{proof}

\Needspace{9\baselineskip}\section{The relative two-stage theorem}
\label{sec:two}

\begin{lemma}[Low-degree coefficients]\label{lem:closed}
For a minimal algebra $M=(\Lambda V,d)$ with $V^{<q}=0$, we have
\[
 d(M^j)=0\qquad(0\le j\le2q-2).
\]
\end{lemma}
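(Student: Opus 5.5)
The plan is a direct degree count in $M=\Lambda V$, using minimality together with the connectivity hypothesis $V^{<q}=0$. For $j=0$ the claim is trivial, since $M^0=\Q$ and $d(1)=0$. For $1\le j\le 2q-2$ we first determine what $M^j$ can contain. By~\eqref{eq:degree-estimate}, $(\mm_M^r)^j=0$ whenever $j<qr$. Because $2q-2<2q$, every word of length at least $2$ has degree at least $2q$, so it cannot lie in degree $j$. Hence $M^j=V^j$: each element of degree $j\le 2q-2$ is a sum of generators.

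Next we use minimality, which gives $d(V)\subseteq\Lambda^{\ge2}V=\mm_M^2$. For $v\in V^j$ the element $dv$ has degree $j+1\le 2q-1$. Applying~\eqref{eq:degree-estimate} once more, $(\mm_M^2)^{j+1}=0$ because $j+1<2q$. Therefore $dv=0$. Extending linearly over $M^j=V^j$ gives $d(M^j)=0$ for all $0\le j\le 2q-2$.

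There is no real obstacle here; the only care needed is with the boundary degree $j=2q-2$. At that degree one must check both inequalities $2q-2<2q$, so that $M^{2q-2}$ contains no products, and $2q-1<2q$, so that the target $(\mm_M^2)^{2q-1}$ vanishes. Both hold, so the range in the statement is exactly the range the argument supports. The bound is also sharp in general: a generator of degree $2q-1$ can have differential $xy$ with $|x|=|y|=q$, a nonzero element of degree $2q$.
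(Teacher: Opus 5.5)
Your proof is correct and is essentially the paper's argument: in both, $d$ of a positive-degree element is decomposable, and a nonzero decomposable element has degree at least $2q$, so $d(M^j)=0$ once $j+1<2q$. Your intermediate step $M^j=V^j$ is harmless but not needed, since the paper applies the decomposability of $d(\mm_M)$ directly; your sharpness remark matches the generator $a_{2q-1}$ with $da=xy$ used in Section~\ref{sec:sharp}.
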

\begin{proof}
The differential of a positive-degree element is decomposable. A nonzero decomposable element has degree at least $2q$, whereas $j+1<2q$ in the stated range. The differential of the unit is also zero.
\end{proof}

\begin{lemma}[Normalization of a primitive]\label{lem:normalize}
Let $f:A\qis B$ be a quasi-isomorphism. If $du=z$ and $f(z)=0$, there is a cocycle $c\in A^{|u|}$ such that
\[
 d(u-c)=z,\qquad f(u-c)=d\beta
\]
for some element $\beta\in B^{|u|-1}$.
\end{lemma}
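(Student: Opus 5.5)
The plan is to correct $u$ by a cocycle chosen through the surjectivity of $H(f)$. Since $z=du$ and $f(z)=0$, the element $f(u)\in B^{|u|}$ satisfies $d f(u)=f(du)=f(z)=0$. So $f(u)$ is a cocycle of $B$ and defines a class $[f(u)]\in H^{|u|}(B)$.

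Because $f$ is a quasi-isomorphism, $H(f)$ is surjective. We therefore choose a cocycle $c\in Z^{|u|}(A)$ with $H(f)[c]=[f(u)]$, that is,
\[
 f(c)-f(u)=d\beta' \quad\text{for some }\beta'\in B^{|u|-1}.
\]
Then $f(u-c)=-d\beta'=d\beta$ with $\beta=-\beta'$. Since $dc=0$, we also have $d(u-c)=du=z$.

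There is essentially no obstacle; only surjectivity of $H(f)$ is used, not injectivity. The one point to note is the degree $|u|=0$. There $A^0$ and $B^0$ consist of cocycles only when the algebras are connected, and then $B^{-1}=0$, so the conclusion requires $f(u)=f(c)$ exactly. Surjectivity of $H^0(f)$ together with the vanishing of boundaries in degree $0$ still supplies such a $c$, so the same argument covers this case, with $\beta=0$.
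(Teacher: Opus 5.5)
Your proof is correct and is exactly the paper's argument: $f(u)$ is a cocycle because $f(z)=0$, surjectivity of $H(f)$ supplies a cocycle $c$ with $[f(c)]=[f(u)]$, and $dc=0$ leaves $d(u-c)=z$ unchanged. Your separate treatment of $|u|=0$ is harmless but unnecessary, since the general argument already covers that case with $\beta\in B^{-1}=0$.
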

\begin{proof}
The element $f(u)$ is closed. The surjectivity of $H(f)$ provides a cocycle $c$ whose image represents the same class. The difference $f(u)-f(c)$ is therefore a boundary. Since $dc=0$, the differential of the primitive is unchanged.
\end{proof}

\begin{remark}
The lemma does not make $f(u-c)$ zero. Nor does it assert that an arbitrary cochain of $B$ can be lifted. Only surjectivity on cohomology is used, and this is available before any surjective strictification.
\end{remark}

\Needspace{14\baselineskip}\begin{theorem}[Relative two-stage elimination]\label{thm:two}
Let $A$ be a connected CDGA of finite type satisfying
\[
 A^j=0\ (1\le j<q),\qquad q\ge2,
 \qquad d(A^{\le2q-2})=0,
 \qquad H^{>N}(A)=0,
\]
where $N\ge1$. Let $f:A\qis B$ be a quasi-isomorphism, and let $I\ideal A$ be a differential ideal such that $f(I)=0$. Suppose that
\[
 H^{<a}(I)=0,\qquad a\ge2.
\]
If $N\le a+2q-3$, there is an acyclic differential ideal $J\ideal A$ with
\[
 I+A^{>N}\subseteq J.
\]
\end{theorem}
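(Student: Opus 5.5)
We follow the two-stage scheme outlined in the introduction. If $a>N$, Lemma~\ref{lem:completion} applies directly to $I$, so we assume $a\le N$. Since $f(I)=0$ and $H(f)$ is injective, every cocycle of $I$ is exact in $A$. Hence for each $a\le s\le N$ we can choose cocycles $z_{s,i}$ representing a basis of $H^s(I)$, together with primitives $u_{s,i}\in A^{s-1}$. By Lemma~\ref{lem:normalize} we may also arrange $f(u_{s,i})=d\beta_{s,i}$ for some $\beta_{s,i}\in B^{s-2}$.

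\textbf{First stage.} Set $J_1=I+\sum A u_{s,i}$, a differential ideal. Lemma~\ref{lem:adjoin} gives $H^j(J_1)=0$ for $j\le N$ and $j<a+q-1$. The remaining degrees satisfy $a+q-1\le j\le N\le a+2q-3$. In that range, an element of $J_1^j$ has the form $v+\sum_i\lambda_i u_{j+1,i}+\sum c_{s,i}u_{s,i}$ with $v\in I$ and positive-degree coefficients $c_{s,i}$. These coefficients satisfy $|c_{s,i}|=j-s+1\le N-a+1\le 2q-2$, so $dc_{s,i}=0$ by the hypothesis $d(A^{\le2q-2})=0$. Then $d(c\,u)=\pm c\,z$ lies in $I$.

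For a cocycle $w$ of $J_1^j$ we argue as in Lemma~\ref{lem:adjoin}. The degree-$(j+1)$ primitives contribute $\sum\lambda_i z_{j+1,i}$ plus terms in $I$. Modulo $I$, the quotient $J_1/I$ in degree $j+1$ is spanned by the $u_{j+2,i}$ and products $c\,u$. I expect that independence of the $[z_{j+1,i}]$ in $H^{j+1}(I)$, combined with the fact that the products $c\,z$ lie in $I$, again forces $\lambda=0$. This bookkeeping needs care.

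We then compute $f(w)$. First, $f(v)=0$. Second, $f(c\,u)=f(c)\,d\beta=\pm d(f(c)\beta)$, because $f(c)$ is closed. So $f(w)$ is a boundary, and injectivity of $H(f)$ shows that $w$ is exact in $A$. In short, $J_1$ satisfies the hypotheses of the first stage: its cohomology in degrees $\le N$ starts at $b=a+q-1$ and maps to zero in $H(A)$.

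\textbf{Second stage.} Apply Lemma~\ref{lem:adjoin} again, now to $J_1$ with initial degree $b$. This adjoins primitives $u'_{s,i}\in A^{s-1}$ for $b\le s\le N$ and produces $J_2$ with $H^j(J_2)=0$ for $j\le N$ and $j<b+q-1=a+2q-2$. Since $N\le a+2q-3$, this covers all $j\le N$; this is exactly Proposition~\ref{prop:one} with $a$ replaced by $b$, and it needs $H^s(J_1)\to H^s(A)$ to be zero, which the first stage established. Lemma~\ref{lem:completion} applied to $J_2$ then yields the acyclic ideal $J\supseteq I+A^{>N}$.

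\textbf{Main obstacle.} The hard step is the first stage: showing that every cocycle of $J_1$ in degrees $[a+q-1,N]$ maps to a boundary in $B$. This requires a careful normal form for cocycles of $J_1$. In particular, the $\lambda$-part coming from degree-$(j+1)$ primitives must either vanish or have boundary-valued image: we have $f(u_{j+1,i})=d\beta_{j+1,i}$, so even a nonzero $\lambda$-part maps to a boundary. The closedness of the low-degree coefficients, guaranteed by $d(A^{\le2q-2})=0$, is what makes $f$ of the products exact; that closedness holds only when $N-a+1\le2q-2$, which is where the bound $N\le a+2q-3$ enters.
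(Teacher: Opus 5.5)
Your proposal is correct and follows the paper's proof essentially step for step: normalized first-stage primitives via Lemma~\ref{lem:normalize}, closedness of the coefficients of degree at most $2q-2$ so that $f$ sends $J_1^{\le N}$ into boundaries of $B$, a second family of primitives whose products start in degree $a+2q-2>N$ (handled by Lemma~\ref{lem:adjoin}, i.e.\ Proposition~\ref{prop:one} with $b$ in place of $a$), and then Lemma~\ref{lem:completion}. The normal-form bookkeeping aimed at forcing $\lambda=0$ is unnecessary, and it need not succeed, since $c\,z_{s,i}$ is a cocycle of $I$ that need not be exact in $I$; as you observe at the end, $f(u_{j+1,i})=d\beta_{j+1,i}$, so every element of $J_1^j$ with $j\le N$ (not only every cocycle) maps to a boundary of $B$, which is exactly how the paper argues.
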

\begin{proof}
If $a>N$, the ideal $I$ is acyclic through degree $N$, and Lemma~\ref{lem:completion} gives the conclusion. Assume that $a\le N$.

\emph{First stage.} For each $a\le s\le N$, choose cocycles $z_{s,i}\in I^s$ representing a basis of $H^s(I)$. They are exact in $A$: their images under $f$ are zero and $H(f)$ is injective. Choose primitives and then apply Lemma~\ref{lem:normalize} to obtain
\begin{equation}\label{eq:normalized}
 du_{s,i}=z_{s,i},\qquad f(u_{s,i})=d\beta_{s,i}.
\end{equation}
Set $J_1=I+\sum_{s,i}A u_{s,i}$. This is a differential ideal.

\emph{Exactness of the first products in the relevant range.} If a homogeneous term $c u_{s,i}$ has degree at most $N$, then
\begin{equation}\label{eq:coefficient-degree}
 |c|\le N-(s-1)\le N-a+1\le2q-2.
\end{equation}
By hypothesis, $dc=0$. The Leibniz rule and~\eqref{eq:normalized} give
\[
 f(cu_{s,i})=f(c)d\beta_{s,i}
           =(-1)^{|c|}d\bigl(f(c)\beta_{s,i}\bigr).
\]
Since $f(I)=0$, every element of $J_1^j$, for $j\le N$, maps to a boundary in $B$. In particular,
\begin{equation}\label{eq:first-essential}
 H^j(J_1)\longrightarrow H^j(A)\quad\text{is zero for }j\le N.
\end{equation}
We have not used the assertion, false in general, that the space of boundaries of $B$ is an ideal. Here the coefficients multiplying boundaries are explicitly closed.

\emph{The degree where new cohomology can first appear.} Write $b=a+q-1$. Lemma~\ref{lem:adjoin} gives
\begin{equation}\label{eq:first-vanishing}
 H^j(J_1)=0\qquad(j\le N,\ j<b).
\end{equation}
Thus the first classes that may survive occur only in degree $b$.

\emph{Second stage.} For $b\le s\le N$, choose cocycles $w_{s,i}$ representing a basis of $H^s(J_1)$. By~\eqref{eq:first-essential}, they are exact in $A$. Choose $v_{s,i}\in A^{s-1}$ with $dv_{s,i}=w_{s,i}$ and set
\[
 J_2=J_1+\sum_{s,i}A v_{s,i}.
\]
The new products with positive-degree coefficients have degree at least
\begin{equation}\label{eq:second-product}
 (b-1)+q=a+2q-2>N.
\end{equation}
If $b\le N$, the groups $H^{<b}(J_1)$ vanish by~\eqref{eq:first-vanishing}, and Lemma~\ref{lem:adjoin}, applied to this second family, gives $H^{\le N}(J_2)=0$. If $b>N$, this already holds for $J_1$, and we take $J_2=J_1$.

Finally, Lemma~\ref{lem:completion} constructs $J$ containing $J_2+A^{>N}$. All the primitives $u_{s,i}$ and $v_{s,i}$ belong to the original algebra $A$. Neither the source nor its multiplication nor its differential has been replaced during the construction.
\end{proof}

\begin{remark}\label{rem:no-third}
The first stage has a normalization in $B$; the second does not need one, since its products lie above $N$. The relevant degree bounds are summarized below.
\begin{center}
\small
\begin{tabular}{@{}>{\raggedright\arraybackslash}p{0.56\linewidth}c@{}}
\toprule
Product & First possible degree\\
\midrule
First primitive times a positive-degree element & $(a-1)+q=a+q-1$\\
First primitive times a possibly nonclosed coefficient & $(a-1)+(2q-1)=a+2q-2$\\
Second primitive times a positive-degree element & $(a+q-2)+q=a+2q-2$\\
\bottomrule
\end{tabular}
\end{center}
The last two thresholds coincide, which gives the bound $N\le a+2q-3$. At degree $a+2q-2$, these products need not be inessential, so a third stage would require additional control.
\end{remark}

\section{Equality of the two nilpotency invariants and consequences}
\label{sec:main-proof}

\begin{theorem}[Refined form]\label{thm:refined}
Let $M=(\Lambda V,d)$ be minimal of finite type, with $V^{<q}=0$, $q\ge2$, $N\ge0$, and $H^{>N}(M)=0$. For $n\ge1$, if
\[
 \nilh(M)\le n,\qquad N\le a_n(M)+2q-3,
\]
then there is an acyclic differential ideal $J$ containing $\mm_M^{n+1}+M^{>N}$. In particular, $\Hnil(M)\le n$. Each of the conditions
\begin{equation}\label{eq:refined-variants}
 N\le\nu_{n+1}(M)+2q-3,
 \qquad N\le q(n+3)-3
\end{equation}
is sufficient.
\end{theorem}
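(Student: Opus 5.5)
The plan is to deduce the refined form directly from Theorem~\ref{thm:two}, applied with $A=M$ and $I=\mm_M^{n+1}$, after disposing of degenerate cases.

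\emph{Degenerate cases.} If $H^+(M)=0$, the ideal $M^+$ is acyclic, contains $\mm_M^{n+1}$, and contains $M^{>N}$ for every $N\ge0$, so it serves as $J$. Otherwise $N\ge1$. If $a_n(M)=+\infty$, then $I$ is acyclic, in particular $H^{\le N}(I)=0$, and Lemma~\ref{lem:completion} gives $J\supseteq I+M^{>N}$ directly. So assume $N\ge1$ and $a=a_n(M)<\infty$.

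\emph{Verifying the hypotheses of Theorem~\ref{thm:two}.} Connectedness and finite type of $M$ are given. The vanishing $M^j=0$ for $1\le j<q$ is~\eqref{eq:degree-estimate}. The condition $d(M^{\le 2q-2})=0$ is Lemma~\ref{lem:closed}, which is exactly where minimality enters. For the quasi-isomorphism, since $\nilh(M)\le n$, Lemma~\ref{lem:direct} provides an augmented quasi-isomorphism $f:M\to B$ with $(B^+)^{n+1}=0$. Because $f$ is augmented, $f(\mm_M)\subseteq B^+$, so $f(\mm_M^{n+1})\subseteq (B^+)^{n+1}=0$, i.e.\ $f(I)=0$. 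Finally, $I$ is a differential ideal by the Leibniz rule (indeed $d(\mm^r)\subseteq\mm^{r+1}$). By definition of $a_n$, $H^{<a}(I)=0$, and $a\ge q(n+1)\ge 4\ge2$ by~\eqref{eq:a-bound}. The hypothesis $N\le a+2q-3$ is the assumption of the theorem. Theorem~\ref{thm:two} then yields an acyclic differential ideal $J\supseteq \mm_M^{n+1}+M^{>N}$, and Lemma~\ref{lem:quotient} (or the definition) gives $\Hnil(M)\le n$.

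\emph{The variants.} By~\eqref{eq:a-bound}, $q(n+1)\le\nu_{n+1}(M)\le a_n(M)$. Hence $N\le\nu_{n+1}(M)+2q-3$ implies $N\le a_n(M)+2q-3$, and $N\le q(n+3)-3=q(n+1)+2q-3$ implies the $\nu$-condition. So both are sufficient. There is no real obstacle here: all the work was done in Theorem~\ref{thm:two}. The only points requiring care are checking that $f$ kills the whole long-word ideal (augmentation) and the bookkeeping of the cases $N=0$ or $a_n=\infty$, where Theorem~\ref{thm:two}'s hypotheses $N\ge1$, $a<\infty$ are not available.
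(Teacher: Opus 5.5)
Your proposal is correct and follows the paper's proof essentially step for step. You dispose of the cases $H^+(M)=0$ and $a_n(M)=+\infty$, then apply Theorem~\ref{thm:two} to $I=\mm_M^{n+1}$, using the witness from Lemma~\ref{lem:direct} and the closedness from Lemma~\ref{lem:closed}, and you deduce both variants from~\eqref{eq:a-bound}. Your explicit check that an augmented $f$ annihilates $\mm_M^{n+1}$ spells out a step the paper leaves implicit.
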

\begin{proof}
If $H^+(M)=0$, the ideal $M^+$ suffices. We may therefore assume $N\ge1$. Lemma~\ref{lem:direct} gives a quasi-isomorphism $f:M\to B$ to an $n$-short target. It annihilates $I=\mm_M^{n+1}$. If $a_n=+\infty$, then $I$ is acyclic, and Lemma~\ref{lem:completion} still allows us to enlarge it to include $M^{>N}$.

Otherwise, take $a=a_n$. By~\eqref{eq:a-bound}, $a\ge q(n+1)\ge2q$. Minimality gives $d(M^{\le2q-2})=0$ by Lemma~\ref{lem:closed}. All the hypotheses of Theorem~\ref{thm:two} are therefore satisfied in $M$, and its conclusion provides $J$. The two variants follow from~\eqref{eq:a-bound}.
\end{proof}

\begin{proof}[Proof of Theorem~\ref{thm:main}]
Apply Theorem~\ref{thm:refined} at level $n$. The second condition in~\eqref{eq:refined-variants} is then the condition in~\eqref{eq:main}. The quotient by the constructed ideal is a surjective quasi-isomorphism by Lemma~\ref{lem:quotient}, so $\Hnil(M)\le n$. Comparison~\eqref{eq:nil-comparison} gives $n\le\Hnil(M)$, hence $\Hnil(M)=n$. The quotient vanishes in degrees greater than $N$ and is finite-dimensional because $M$ is of finite type.
\end{proof}

\begin{corollary}[Formulation for rational spaces]\label{cor:spaces}
Let $X$ be a $(q-1)$-connected rational space of finite type, where $q\ge2$, and suppose that $H^{>N}(X;\Q)=0$. Assume that $\nilh(X)=n$ for an integer $n\ge1$. If $N\le q(n+3)-3$, then
\[
 \Hnil(X)=n.
\]
\end{corollary}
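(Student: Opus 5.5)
The plan is to translate the space-level statement into the language of minimal Sullivan algebras and then apply Theorem~\ref{thm:main} directly. Let $M_X=(\Lambda V,d)$ be the minimal Sullivan model of $X$. Since $X$ is simply connected (indeed $(q-1)$-connected with $q\ge2$) and of finite type, the standard existence theory \cite[Chapters~12 and~14]{FHT} provides such a model, of finite type over $\Q$, with $H(M_X)\cong H^*(X;\Q)$ as graded algebras. The hypothesis $H^{>N}(X;\Q)=0$ therefore becomes $H^{>N}(M_X)=0$. By the convention stated after Lemma~\ref{lem:quotient}, the invariants of $X$ are defined through $M_X$, so $\nilh(M_X)=\nilh(X)=n$, and $\Hnil(X)=\Hnil(M_X)$. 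We use here that these invariants do not depend on the choice of minimal model, since isomorphisms of minimal algebras transport augmentation ideals, their powers, and acyclic ideals.

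The one step needing care is the connectivity translation $V^{<q}=0$. For a simply connected space of finite type, the generators of the minimal model satisfy $V^k\cong\operatorname{Hom}(\pi_k(X)\otimes\Q,\Q)$ \cite{FHT}. Since $X$ is $(q-1)$-connected, $\pi_k(X)=0$ for $k<q$, and hence $V^{<q}=0$. Alternatively, one can argue directly with cohomology. The minimal model built inductively from $H^{<q}(X;\Q)=0$ (Hurewicz) needs no generators below degree $q$. Indeed, the lowest generator degree of $V$ equals the lowest positive degree of $H(M_X)$, because low-degree generators are closed by the decomposability of $dV$.

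With $N\ge0$, $q\ge2$, $n\ge1$ and $N\le q(n+3)-3$, every hypothesis of Theorem~\ref{thm:main} holds for $M=M_X$. That theorem yields $\Hnil(M_X)=n$, and this is $\Hnil(X)=n$. I expect no genuine obstacle. The only point to state carefully is the dictionary between rational homotopy groups (or connectivity) and generator degrees of the minimal model, and I would cite it rather than reprove it.
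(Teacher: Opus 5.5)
Your proposal is correct and follows the paper's own proof: pass to the minimal model $M_X$, note that it is of finite type with $V^{<q}=0$ and $H(M_X)\cong H^*(X;\Q)$, and apply Theorem~\ref{thm:main}. Your Hurewicz-based justification of $V^{<q}=0$ is a valid alternative to citing the identification of $V$ with dual rational homotopy: the lowest-degree generators are closed, so the lowest generator degree equals the lowest positive cohomology degree. The hypothesis $N\ge0$, which you take for granted, holds automatically because $H^0(X;\Q)=\Q$.
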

\begin{proof}
The minimal model of $X$ is of finite type and satisfies $V^{<q}=0$; its cohomology is that of $X$ \cite{FHT}. Theorem~\ref{thm:main} applies.
\end{proof}

\Needspace{10\baselineskip}
At level three, the uniform values are as follows.
\begin{center}
\begin{tabular}{ccc}
\toprule
Connectivity at least & $q$ & Equality if $\nilh=3$ and\\
\midrule
$1$ & $2$ & $H^{>9}(M)=0$\\
$2$ & $3$ & $H^{>15}(M)=0$\\
$3$ & $4$ & $H^{>21}(M)=0$\\
$4$ & $5$ & $H^{>27}(M)=0$\\
\bottomrule
\end{tabular}
\end{center}
This table states sufficient conditions; it does not, by itself, locate the first possible separation. Compared with the uniform range $q(n+2)-2$ in Corollary~\ref{cor:toomer}, the gain is $q-1$ degrees.

\begin{corollary}[Application of an independent bound]\label{cor:transfer}
Let $m\ge0$ be an integer, and let $M$ be minimal and satisfy the degree and cohomology assumptions of Theorem~\ref{thm:main}. If an independent argument gives $\nilh(M)\le m+1$, then
\[
 N\le q(m+4)-3\quad\Longrightarrow\quad\Hnil(M)\le m+1.
\]
\end{corollary}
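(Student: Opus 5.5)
The plan is to apply Theorem~\ref{thm:refined} at level $n=m+1$. The hypotheses of Theorem~\ref{thm:main} supply what that theorem needs: $M$ is minimal of finite type, $V^{<q}=0$ with $q\ge2$, and $H^{>N}(M)=0$. The independent argument supplies $\nilh(M)\le m+1$. Theorem~\ref{thm:refined} only needs the inequality $\nilh(M)\le n$, not equality. So no statement about the exact value of $\nilh(M)$ is required, and the equality assumption $\nilh(M)=n$ from Theorem~\ref{thm:main} plays no role here.

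The key step is to verify the degree condition. With $n=m+1$, the uniform variant in~\eqref{eq:refined-variants} reads
\[
 N\le q(n+3)-3=q(m+4)-3,
\]
which is exactly the hypothesis of the corollary. This variant is sufficient because of the estimate $a_n(M)\ge\nu_{n+1}(M)\ge q(n+1)$ from~\eqref{eq:a-bound}. That estimate gives $q(n+3)-3\le a_n(M)+2q-3$, so $N\le a_n(M)+2q-3$ and the refined condition holds.

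Theorem~\ref{thm:refined} then produces an acyclic differential ideal $J$ containing $\mm_M^{m+2}+M^{>N}$. By Definition~\ref{def:invariants}, or equivalently by Lemma~\ref{lem:quotient}, this gives $\Hnil(M)\le m+1$.

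Two edge cases need a check. If $m=0$, then $n=1\ge1$, so Theorem~\ref{thm:refined} still applies. If $H^+(M)=0$ or $N=0$, that case is already handled inside the proof of Theorem~\ref{thm:refined}.

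There is no real obstacle. The only point needing care is that one must not invoke Theorem~\ref{thm:main} itself, since it assumes $\nilh(M)=n$ exactly. Using the inequality form in Theorem~\ref{thm:refined} avoids this.
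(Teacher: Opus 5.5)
Your proposal is correct and follows the paper's proof, which likewise applies the uniform variant $N\le q(n+3)-3$ of Theorem~\ref{thm:refined} at level $n=m+1$. Your added checks (the inequality form $\nilh(M)\le n$ is all that theorem needs, the estimate via~\eqref{eq:a-bound}, and the edge cases $m=0$ and $H^+(M)=0$) are accurate and only make explicit what the paper's one-line proof leaves implicit.
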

\begin{proof}
Apply the uniform upper-bound form of Theorem~\ref{thm:refined} with $n=m+1$.
\end{proof}
This is an application rule: a bound on $\nilh$ must first be established independently for the space or construction under study. The corollary then converts that bound into a bound on $\Hnil$ in the stated dimension range; it supplies no independent estimate for fibrations or cell attachments.

\begin{example}[Formality without a dimension restriction]
Suppose that $M$ is formal and its cohomology has finite cup-length $r$. There is a quasi-isomorphism $f:M\to(H(M),0)$ by Lemma~\ref{lem:direct}, applied to the formal type. It is surjective: every element of its target is closed, and surjectivity of $H(f)$ means here that it is the actual image of a cocycle of $M$. Thus $\Hnil(M)\le r$;~\eqref{eq:toomer-chain} gives $\nilh(M)=\Hnil(M)=r$. The dimension theorem is therefore not a necessary condition for equality.
\end{example}

\section{Why the hypothesis on homotopical nilpotency is necessary}
\label{sec:toomer-examples}

The first range required only $e_0\le n$. The second uses the normalization~\eqref{eq:normalized} in a short witness. We show that this difference is not merely a feature of the proof.

Fix $n\ge2$. Consider the minimal algebra
\begin{equation}\label{eq:F-toomer}
 F_n=\bigl(\Lambda(x_2,y_2,p_0,\ldots,p_{n+1}),d\bigr),
 \qquad |p_i|=2n+1,
\end{equation}
with $dx=dy=0$ and
\[
 dp_i=x^{n+1-i}y^i\qquad(0\le i\le n+1).
\]
Its differential squares to zero in the free algebra. Set $T=2n+3$ and
\[
 C_n=\Span\{xp_0,yp_0,yp_1,\ldots,yp_{n+1}\}\subseteq F_n^T,
 \qquad R_n=F_n/(C_n+F_n^{>T}).
\]
The denominator is a differential ideal: its positive-degree products and the differential of $C_n$ lie above $T$.

\begin{lemma}\label{lem:toomer-H}
The projection $F_n\to R_n$ induces a cohomology isomorphism through degree $T$, and $H^{T+1}(F_n)=0$. The cohomology of $R_n$ is given by
\[
 H^{2j}(R_n)=\Sym^j\langle[x],[y]\rangle\quad(0\le j\le n),
 \qquad H^T(R_n)\cong\Q^{n+1},
\]
with all other groups zero. A basis of the last group is represented by
\begin{equation}\label{eq:syzygies}
 \sigma_i=xp_i-yp_{i-1},\qquad 1\le i\le n+1.
\end{equation}
\end{lemma}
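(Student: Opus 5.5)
The plan is a direct degree count. In degrees $\le T+1=2n+4$, the algebra $F_n$ has a very small explicit basis, so every cohomology group in the statement can be read off from the rank of a single linear map. The starting observation concerns products of the odd generators. A product $p_ip_j$ has degree $4n+2$, and $4n+2>2n+4$ because $n\ge2$. So in degrees $\le T+1$ the algebra consists only of polynomials in $x,y$, together with polynomials in $x,y$ of word length $\le1$ times single $p_i$'s. Concretely:
\begin{itemize}
\item $F_n^{2j}=\Sym^j\langle x,y\rangle$ for $2j\le 2n+4$;
\item $F_n^{2n+1}=\Span\{p_0,\dots,p_{n+1}\}$;
\item $F_n^{T}=\Span\{xp_i,\,yp_i\}$;
\item all other odd degrees below $T+1$ vanish.
\end{itemize}
Every even-degree element is closed, and no even-degree element is a boundary except in degrees $2n+2$ and $2n+4$.

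First I will handle degree $2n+1$. Here $d$ sends the $n+2$ elements $p_i$ bijectively onto the $n+2$ monomials of degree $n+1$. Hence $H^{2n+1}(F_n)=0$ and $H^{2n+2}(F_n)=0$, while $H^{2j}(F_n)=\Sym^j\langle[x],[y]\rangle$ for $j\le n$.

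In degree $T$, the Leibniz rule gives $d(xp_i)=x^{n+2-i}y^i$ and $d(yp_i)=x^{n+1-i}y^{i+1}$. These images already exhaust all $n+3$ monomials of degree $n+2$. So $d\colon F_n^T\to F_n^{T+1}$ is surjective, which gives $H^{T+1}(F_n)=0$. Its kernel has dimension $2(n+2)-(n+3)=n+1$. A one-line computation shows $d\sigma_i=0$. The elements $\sigma_1,\dots,\sigma_{n+1}$ are independent, since $\sigma_i$ is the only one involving $xp_i$. Moreover $F_n^{T-1}=F_n^{2n+2}$ consists of cocycles, so there are no boundaries in degree $T$. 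Hence the $\sigma_i$ form a basis of $H^T(F_n)$.

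Next I will pass to $R_n$. In degrees $<T$ the ideal $C_n+F_n^{>T}$ is zero, so $R_n$ agrees there with $F_n$, including the differential into degree $T$, because $C_n$ lies in degree $T$ only. In degree $T$ we have $R_n^T=F_n^T/C_n$, and $R_n^{T+1}=0$. The differential into degree $T$ is zero, since $F_n^{2n+2}$ consists of cocycles. Therefore $H^T(R_n)=F_n^T/C_n$, which has dimension $n+1$. Modulo $C_n$ we get $\sigma_i\equiv xp_i$, because $yp_{i-1}\in C_n$. The elements $xp_1,\dots,xp_{n+1}$ together with $C_n$ span $F_n^T$. So the images of the $\sigma_i$ form a basis, and the projection is an isomorphism on $H^T$. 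In lower degrees the projection is the identity on cochains and on the differentials involved, so it is an isomorphism there as well. Finally, $R_n$ vanishes above $T$, so all higher groups are zero.

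The only delicate step is the exact rank bookkeeping in degree $T$. One must check that the $n+3$ monomials are all hit, and that the quotient by $C_n$ keeps precisely the syzygy classes. I will verify this by listing the images of the $2(n+2)$ basis elements explicitly.
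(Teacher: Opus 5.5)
Your proposal is correct and follows essentially the same direct degree count as the paper: polynomial cohomology below degree $2n+1$, the bijection $p_i\mapsto x^{n+1-i}y^i$ killing degrees $2n+1$ and $2n+2$, surjectivity of $d\colon F_n^T\to\Sym^{n+2}\langle x,y\rangle$ with kernel spanned by the $\sigma_i$, and $\sigma_i\equiv xp_i$ modulo $C_n$. The only cosmetic difference is in the last step. The paper says $C_n$ is a complement to the cycles, because $d$ maps $C_n$ isomorphically onto $\Sym^{n+2}\langle x,y\rangle$, whereas you count $\dim F_n^T/C_n=n+1$ directly; the two are equivalent.
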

\begin{proof}
Below degree $2n+1$, only monomials in $x,y$ occur. The differential sends the $n+2$ elements $p_i$ bijectively onto the monomial basis of $\Sym^{n+1}\langle x,y\rangle$; hence degrees $2n+1$ and $2n+2$ contribute no cohomology.

In degree $T$, the source has a basis consisting of the $2n+4$ monomials $xp_i,yp_i$. The differential maps onto all of $\Sym^{n+2}\langle x,y\rangle$, which has dimension $n+3$. The $n+1$ cycles in~\eqref{eq:syzygies} are independent and span its kernel. The restriction of $d$ to $C_n$ is an isomorphism onto the image, since the images of the displayed basis are
\[
 x^{n+2},\ x^{n+1}y,\ x^ny^2,\ldots,y^{n+2}.
\]
Thus $C_n$ is a complement to the cycles in degree $T$. The projection sends the polynomial classes to their namesakes and sends $[\sigma_i]$ to $[xp_i]$ in $H(R_n)$, since $yp_{i-1}\in C_n$. These images form a cohomology basis through degree $T$, so the induced morphisms are isomorphisms. In degree $T+1$, all polynomial monomials are boundaries. Products of two $p_i$ begin in degree $4n+2>T+1$. This proves all the assertions.
\end{proof}

Lemma~\ref{lem:prefix}, proved in the appendix, gives a minimal model $E_n\qis R_n$ extending $F_n$, with no new generator in degrees at most $T$.

\begin{theorem}\label{thm:toomer-example}
For every $n\ge2$, we have
\begin{equation}\label{eq:toomer-counter}
 \cdim E_n=2n+3,
 \qquad e_0(E_n)=n<\nilh(E_n)=\Hnil(E_n)=n+1.
\end{equation}
\end{theorem}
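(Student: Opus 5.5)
The four claims in \eqref{eq:toomer-counter} are proved separately, and the chain \eqref{eq:toomer-chain} is used to glue them. Throughout, the cohomology of $E_n$ is read off from the quasi-isomorphism $E_n\qis R_n$ and Lemma~\ref{lem:toomer-H}. We also use the fact that $E_n$ agrees with $F_n$ in degrees at most $T$, since Lemma~\ref{lem:prefix} adds no generator there.

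\emph{Cohomological dimension.} By Lemma~\ref{lem:toomer-H}, $H^T(R_n)\cong\Q^{n+1}\ne0$ and all groups above $T$ vanish. Hence $\cdim E_n=2n+3$.

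\emph{Upper bound on $\Hnil$.} Since $V^{<2}=0$, \eqref{eq:degree-estimate} gives $(\mm^{n+2})^j=0$ for $j<2n+4$. Thus $(\mm^{n+2})^{\le T}=0$, and Corollary~\ref{cor:degree} with $N=T$ yields $\Hnil(E_n)\le n+1$.

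\emph{Toomer invariant.} For the lower bound $e_0\ge n$, note that $[x]^n\ne0$ in $\Sym^n\langle[x],[y]\rangle$. So the cup-length is at least $n$, and \eqref{eq:toomer-chain} applies.

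For the upper bound $e_0\le n$, I use Proposition~\ref{prop:toomer}: it suffices to show that every cocycle of $\mm^{n+1}$ is exact in $E_n$. Elements of $\mm^{n+1}$ have degree at least $2n+2$. In degree $2n+2$ they are polynomials in $x,y$ of degree $n+1$, and these are boundaries $dp_i$. In degree $T=2n+3$ there are no words of length $n+1$ at all. Indeed, up to degree $T$ the generators are $x,y$ in degree $2$ and the $p_i$ in degree $2n+1$. An odd-degree word must contain some $p_i$, and then its degree $2n+3$ forces the word to be $xp_i$ or $yp_i$, which has length $2<n+1$ because $n\ge2$. Above $T$ the cohomology of $E_n$ vanishes, so every cocycle there is exact. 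Hence $H(\mm^{n+1})\to H(E_n)$ is zero, and $e_0(E_n)=n$.

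\emph{Lower bound on $\nilh$ (the main step).} Suppose that $\nilh(E_n)\le n$. Lemma~\ref{lem:direct} gives a quasi-isomorphism $f:E_n\to B$ with $(B^+)^{n+1}=0$. Each product $f(x)^{n+1-i}f(y)^i$ is a product of $n+1$ elements of $B^+$, so it vanishes. Hence every $f(p_i)$ is a cocycle of degree $2n+1$. Since $H^{2n+1}(B)\cong H^{2n+1}(E_n)=0$, we can write $f(p_i)=d\gamma_i$. Because $x$ and $y$ are closed of even degree,
\[
 f(\sigma_i)=f(x)\,d\gamma_i-f(y)\,d\gamma_{i-1}
 =d\bigl(f(x)\gamma_i-f(y)\gamma_{i-1}\bigr).
\]
So $H(f)$ kills the nonzero classes $[\sigma_i]\in H^T(E_n)$, which contradicts injectivity of $H(f)$. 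Therefore $\nilh(E_n)\ge n+1$.

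Combined with \eqref{eq:nil-comparison} and $\Hnil(E_n)\le n+1$, this gives $\nilh(E_n)=\Hnil(E_n)=n+1$.

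The one point to check carefully is that the $\sigma_i$ still represent nonzero classes in $E_n$ itself, and not only in $F_n$. This follows from Lemma~\ref{lem:toomer-H}: the composite $F_n\to E_n\to R_n$ is the projection, which sends $[\sigma_i]$ to a basis of $H^T(R_n)$.
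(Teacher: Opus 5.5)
Your proposal is correct and follows the paper's proof essentially step for step. The cohomological dimension comes from Lemma~\ref{lem:toomer-H}, the bounds on $e_0$ from $[x]^n\ne0$ together with exactness of all cycles of $\mm^{n+1}$, the lower bound on $\nilh$ from the $\sigma_i$ contradiction, and $\Hnil\le n+1$ from Corollary~\ref{cor:degree}. Your explicit primitive $f(x)\gamma_i-f(y)\gamma_{i-1}$ and your check, via $F_n\to E_n\to R_n$, that $[\sigma_i]\ne0$ in $H(E_n)$ spell out steps the paper leaves implicit.
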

\begin{proof}
The cohomological dimension follows from Lemma~\ref{lem:toomer-H}. The nonzero class $[x^n]\ne0$ gives $e_0(E_n)\ge n$. In degrees at most $T$, the only words of length at least $n+1$ are the polynomial monomials of degree $2n+2$, all of which are exact. A word of this length containing a $p_i$ would have degree at least $2n+1+2n>T$. The new minimal generators also lie above $T$. Since $H^{>T}(E_n)=0$, every cycle of $(E_n^+)^{n+1}$ is exact in $E_n$. Proposition~\ref{prop:toomer} therefore gives $e_0(E_n)=n$.

Suppose that $\nilh(E_n)\le n$ and choose a quasi-isomorphism $f:E_n\to B$ with $(B^+)^{n+1}=0$. The images $f(dp_i)$ are zero, so the $f(p_i)$ are closed. The group $H^{2n+1}(B)$ is zero; hence these images are exact. Since $f(x)$ and $f(y)$ are closed, all the $f(\sigma_i)$ are exact. This contradicts the injectivity of $H(f)$ on the nonzero classes in~\eqref{eq:syzygies}. Thus $\nilh(E_n)\ge n+1$.

Finally, $(E_n^+)^{n+2}$ starts in degree $2n+4>T$. Corollary~\ref{cor:degree} constructs an acyclic ideal containing this power, giving $\Hnil(E_n)\le n+1$. The inequality $\nilh\le\Hnil$ completes the proof.
\end{proof}

For $n=3$, this example has $H^{>9}=0$ and $e_0=3$, but $\Hnil=4$. It does not contradict Theorem~\ref{thm:main}, since its homotopical nilpotency is $4$. It shows that the upper-bound hypothesis $\nilh\le3$ in Theorem~\ref{thm:refined} cannot be replaced by $e_0\le3$.

\section{Optimality of the refined constant}
\label{sec:sharp}

\subsection{A family of three-short CDGAs}

We use the algebraic pattern of \cite[Chapters~11--12]{Parent2026}, with a new grading and a degree truncation adapted to the refined bound. To detect the refined endpoint, we must rule out every correction in the two-parameter family of critical primitives~\eqref{eq:all-primitives}.

Fix an odd integer $q\ge3$. Consider the free \emph{graded} algebra
\[
 \mathcal F_q=\Lambda(s_q,x_q,y_q,a_{2q-1},t_{4q-2},r_{5q-3}),
\]
and the derivation $\delta$ of degree $+1$ defined by
\begin{equation}\label{eq:separator-d}
 \delta s=\delta x=\delta y=0,\qquad
 \delta a=xy,\quad\delta t=ays,\quad\delta r=xt.
\end{equation}
The elements $s,x,y,a$ are odd, whereas $t,r$ are even. We have
\begin{equation}\label{eq:curvature}
 \delta^2t=xyys=0,\qquad \delta^2r=-xays\ne0
 \quad\text{in }\mathcal F_q.
\end{equation}
Thus $(\mathcal F_q,\delta)$ is not a CDGA. Set $N_q=7q-3$ and
\begin{equation}\label{eq:Dq}
 D_q=\left(\mathcal F_q\big/
       \bigl((\mathcal F_q^+)^4+\mathcal F_q^{>N_q}\bigr),\bar\delta\right).
\end{equation}
Both ideals in the denominator are stable under $\delta$. The square of an odd derivation is a derivation; its only nonzero value on the generators in~\eqref{eq:curvature} has length four. It therefore vanishes in the quotient. Thus $D_q$ is a simply connected, finite-type, three-short CDGA.

The following table gives a complete basis. The displayed order is chosen to simplify the differential formulas; passing to a canonical order introduces only Koszul signs.
\begin{center}
\begin{tabular}{cl}
\toprule
Degree & Basis of $D_q$\\
\midrule
$0$ & $1$\\
$q$ & $s,x,y$\\
$2q-1$ & $a$\\
$2q$ & $sx,sy,xy$\\
$3q-1$ & $sa,xa,ya$\\
$3q$ & $sxy$\\
$4q-2$ & $t$\\
$4q-1$ & $sxa,sya,xya$\\
$5q-3$ & $r$\\
$5q-2$ & $st,xt,yt$\\
$6q-3$ & $at,sr,xr,yr$\\
$6q-2$ & $sxt,syt,xyt$\\
$7q-4$ & $ar$\\
$7q-3$ & $ast,axt,ayt,sxr,syr,xyr$\\
\bottomrule
\end{tabular}
\end{center}
To check completeness, observe that each monomial has at most three letters, odd letters cannot be repeated, and $t^2$, $tr$, $r^2$ already lie above $N_q$. If exactly one $t$ or $r$ occurs, the remaining letters are chosen from the four odd ones; the degree bound gives precisely the table. Monomials containing neither $t$ nor $r$ correspond to subsets of at most three odd letters. The total dimension is $34$.

Besides the three differentials in~\eqref{eq:separator-d}, the only nonzero differentials in the table are
\begin{align}
 d(sa)&=-sxy,&d(at)&=xyt,\label{eq:table-d1}\\
 d(sr)&=-sxt,&d(yr)&=xyt,&d(ar)&=xyr-axt.\label{eq:table-d2}
\end{align}
In particular, $d(st)=d(yt)=0$; these identities follow from $s^2=y^2=0$, not merely from truncation.

\begin{lemma}\label{lem:D-cohom}
We have $\dim H(D_q)=20$, $\cdim D_q=7q-3$, and
\begin{equation}\label{eq:essential}
 [syt]\ne0\in H^{6q-2}(D_q),\qquad
 [axt],[ayt]\text{ are independent in }H^{7q-3}(D_q).
\end{equation}
More precisely, the series recording the cohomology dimensions is
\begin{align}\label{eq:betti}
 P_{D_q}(z)={}&1+3z^q+2z^{2q}+2z^{3q-1}+2z^{4q-1}\notag\\
             &+2z^{5q-2}+2z^{6q-3}+z^{6q-2}+5z^{7q-3}.
\end{align}
\end{lemma}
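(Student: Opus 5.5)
The plan is a direct degree-by-degree computation, using the basis table and the list of nonzero differentials \eqref{eq:separator-d}, \eqref{eq:table-d1}, \eqref{eq:table-d2}. Because $q\ge3$ is odd, the degrees $0,q,2q-1,2q,3q-1,3q,4q-2,4q-1,5q-3,5q-2,6q-3,6q-2,7q-4,7q-3$ are pairwise distinct. So $D_q$ splits as a direct sum of small complexes $D^{k}\to D^{k+1}$, one for each pair of consecutive table degrees differing by one. The pairs are $(2q-1,2q)$, $(3q-1,3q)$, $(4q-2,4q-1)$, $(5q-3,5q-2)$, $(6q-3,6q-2)$ and $(7q-4,7q-3)$. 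In each degree, $\dim H^k=\dim\ker d^k-\operatorname{rank}d^{k-1}$, and I will record the rank of each differential.

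Before computing, I will confirm that the listed differentials are complete, since this carries the weight of the argument. This is done by applying the Leibniz rule to each basis monomial and reducing modulo length $\ge4$ and degree $>7q-3$, with the Koszul signs fixed by the parities ($s,x,y,a$ odd). For example, $d(xa)=-x\cdot xy=0$ and $d(ya)=-yxy=0$, while $d(sa)=-sxy$. Similarly, $d(st)=-s\cdot ays=0$ and $d(yt)=-yays=0$ because $s^2=y^2=0$, $d(xt)=-xays$ has length four, $d(ar)=xyr-axt$, and $d(xr)=-x\cdot xt=0$. This check also gives $d^2=0$ on $D_q$ at length $\le3$.

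The ranks I expect are as follows:
\begin{itemize}
\item $2q-1\to2q$: rank $1$ ($a\mapsto xy$);
\item $3q-1\to3q$: rank $1$;
\item $4q-2\to4q-1$: rank $1$ ($t\mapsto ays=sya$ up to sign);
\item $5q-3\to5q-2$: rank $1$ ($r\mapsto xt$);
\item $6q-3\to6q-2$: the images $xyt,-sxt,xyt$ of $at,sr,yr$ span a space of rank $2$;
\item $7q-4\to7q-3$: rank $1$.
\end{itemize}
Hence the cohomology dimensions are:
\begin{itemize}
\item $H^0=1$ and $H^q=3$;
\item $H^{2q-1}=0$ and $H^{2q}=3-1=2$;
\item $H^{3q-1}=2$ and $H^{3q}=0$;
\item $H^{4q-2}=0$ and $H^{4q-1}=2$;
\item $H^{5q-3}=0$ and $H^{5q-2}=3-1=2$;
\item $H^{6q-3}=4-2=2$ and $H^{6q-2}=3-2=1$;
\item $H^{7q-4}=0$ and $H^{7q-3}=6-1=5$.
\end{itemize}
These sum to $20$ and reproduce \eqref{eq:betti}. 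Since $D_q$ vanishes above $7q-3$ and $H^{7q-3}\ne0$, this also gives $\cdim D_q=7q-3$.

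For \eqref{eq:essential}, the boundaries in degree $6q-2$ are spanned by $xyt$ and $sxt$. The element $syt$ is closed (length three, with $d(syt)$ of length four) and lies outside this span, so $[syt]\ne0$. In degree $7q-3$ all six basis elements are closed, and the boundaries are spanned by $xyr-axt$. Any relation $\lambda\,axt+\mu\,ayt\in\Span\{xyr-axt\}$ forces the $xyr$-coefficient to vanish, hence $\lambda=\mu=0$, so $[axt]$ and $[ayt]$ are independent.

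The main obstacle is the sign bookkeeping and the completeness of the differential list, in particular confirming that no hidden length-three term survives, as with $d(xt)$ and $d(st)$. If a sign were wrong, the rank computation in degree $6q-3$ would be the fragile point. There I will check directly that $d(at)$ and $d(yr)$ are proportional, since both are multiples of $xyt$, while $d(sr)$ is not.
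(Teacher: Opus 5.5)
Your proposal is correct and is essentially the paper's own computation: the paper splits $D_q$ into seven acyclic two-term complexes plus a zero-differential complement spanned by twenty cocycles, which is the same linear algebra as your degree-by-degree rank count, and your arguments for $[syt]\ne0$ and for the independence of $[axt],[ayt]$ are the paper's. One small correction: for $q=3$ the degree pairs $(3q,4q-2)$, $(4q-1,5q-3)$ and $(6q-2,7q-4)$ also differ by one, so your six-piece degreewise splitting is valid there only because $d$ vanishes in degrees $3q$, $4q-1$ and $6q-2$; your completeness check of the differential list supplies exactly this fact.
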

\begin{proof}
For $dv=w$, write $K(v,w)$ for the two-term complex $\Q v\xrightarrow{d}\Q w$. The differential formulas split $D_q$, as a complex of vector spaces, into the seven acyclic summands
\[
\begin{gathered}
 K(a,xy),\quad K(sa,-sxy),\quad K(t,ays),\quad K(r,xt),\\
 K(at,xyt),\quad K(sr,-sxt),\quad K(ar,xyr-axt),
\end{gathered}
\]
and a zero-differential complement with basis
\[
\begin{gathered}
1;\quad s,x,y;\quad sx,sy;\quad xa,ya;\quad sxa,xya;\\
st,yt;\quad xr,yr-at;\quad syt;\quad ast,axt,ayt,sxr,syr.
\end{gathered}
\]
To obtain this decomposition from the monomial basis, replace $yr$ by $yr-at$ and $xyr$ by $xyr-axt$, retaining $at$ and $axt$. Both changes are invertible. This splitting is a decomposition of complexes, not of algebras. The twenty displayed cocycles therefore give a basis of cohomology, with degrees recorded in~\eqref{eq:betti}.

In degree $6q-2$, the boundaries are spanned by $sxt,xyt$, so $syt$ represents a nonzero class. In the top degree, all six cochains in the table are closed, and the preceding degree is spanned by $ar$. The boundary subspace is the line $\Q(xyr-axt)$. A combination of $axt,ayt$ cannot belong to this line unless it is zero, by the independence of the monomials $xyr,axt,ayt$. This proves~\eqref{eq:essential}.
\end{proof}

\subsection{The genuinely minimal prefix}

Remove the generator $r$ and set
\[
 P_q=\bigl(\Lambda(s_q,x_q,y_q,a_{2q-1},t_{4q-2}),d\bigr),
 \qquad da=xy,\quad dt=ays.
\]
This time $d^2=0$ in the free algebra. The differentials are decomposable, and their factors have degrees strictly smaller than the degree of the generator being differentiated. Thus $P_q$ is minimal as an absolute algebra. Sending the generators to their namesakes defines a morphism $\phi_0:P_q\to D_q$.

\begin{lemma}\label{lem:rigid-prefix}
The morphism $H^j(\phi_0)$ is an isomorphism for $j\le5q-1$, and $H^{5q}(P_q)=0$. Hence there is a finite-type minimal model
\begin{equation}\label{eq:Mq}
 \phi:M_q=P_q\otimes\Lambda W\qis D_q,
 \qquad W^{\le5q-1}=0.
\end{equation}
In particular,
\begin{equation}\label{eq:critical-primitives}
 M_q^{5q-2}=\Span\{st,xt,yt\},\qquad
 Z^{5q-2}(M_q)=\Span\{st,yt\}.
\end{equation}
\end{lemma}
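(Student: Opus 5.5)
The plan is to compute $H^{\le 5q}(P_q)$ directly from a monomial basis in low degrees, compare it with the table and Betti series~\eqref{eq:betti} of $D_q$, and then invoke the prefix-extension result (Lemma~\ref{lem:prefix} of the appendix) to obtain $M_q$.

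\emph{Step 1: basis of $P_q$ through degree $5q$.} The generators have degrees $q,q,q,2q-1,4q-2$, with $s,x,y,a$ odd and $t$ even. Through degree $5q$ the monomials are:
\begin{itemize}
\item all monomials in the four odd letters;
\item the monomial $t$;
\item $t$ times one odd letter.
\end{itemize}
Monomials $t\cdot(\text{two odd letters})$ have degree at least $6q-2>5q$, and $t^2$ has degree $8q-4>5q$. Monomials in the odd letters alone have degrees $0,q,2q-1,2q,3q-1,3q,4q-1,5q-1$. The last of these is $sxya$, which is absent from $D_q$ because it has length $4$. Here one uses $q\ge3$ to separate the degrees $2q-1$ and $2q$, $3q-1$ and $3q$, and so on, and to check that no degree coincides with $4q-2$ or $5q-3$.

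So in degrees $\le 5q-2$ the underlying spaces of $P_q$ and $D_q$ agree, except that $D_q$ has $r$ in degree $5q-3$. In addition, $P_q$ has $sxya$ in degree $5q-1$, and $P_q^{5q}=0$ because $5q$ is not attained.

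\emph{Step 2: differentials and cohomology.} In $P_q$ the differentials are
\[
d(sa)=-sxy,\quad d(xa)=0,\quad d(ya)=0,\quad d(t)=ays,\quad d(st)=0,\quad d(yt)=0,\quad d(xt)=xays,
\]
together with $d(sxa)=d(sya)=d(xya)=0$. One also has $d(sxya)=\pm sxyxy=0$.

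The relation $d(xt)=\pm sxya$ means that $sxya$ is exact in $P_q$, so $H^{5q-1}(P_q)=0$. In $D_q$ the degree-$(5q-1)$ part is also zero, so these agree. In $D_q$, the element $xt$ instead pairs with $r$ through $d(r)=xt$. Hence $Z^{5q-2}$ is spanned by $st,yt$ in both algebras. In $P_q$ the boundaries in degree $5q-2$ are zero, since $P_q^{5q-3}=0$; in $D_q$ the boundary $xt$ is not among these cocycles. So $H^{5q-2}$ is $\Span\{[st],[yt]\}$ on both sides, consistent with~\eqref{eq:betti}.

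In the lower degrees, comparing the summands $K(a,xy)$, $K(sa,-sxy)$ and $K(t,ays)$ shows that $\phi_0$ induces the identity on the namesake cocycle bases. The remaining cases are immediate:
\begin{itemize}
\item degree $5q-3$: $H=0$ in $P_q$, and in $D_q$ the element $r$ is not closed;
\item degree $5q$: $H^{5q}(P_q)=0$ because $P_q^{5q}=0$.
\end{itemize}

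\emph{Step 3: extension.} Lemma~\ref{lem:prefix} states that a minimal prefix mapping to the target by an isomorphism on cohomology through degree $m$, and injectively in degree $m+1$, extends by generators of degree $\ge m$. Applying it with $m=5q-1$, the vanishing of $H^{5q}(P_q)$ supplies the injectivity in degree $m+1$.

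I expect the main obstacle to be the exact degree convention in Lemma~\ref{lem:prefix}. Whether new generators start in degree $5q-1$ or $5q$ matters: the conclusion $W^{\le 5q-1}=0$ requires them to start in degree $5q$, which holds because $H^{5q-1}(\phi_0)$ is already an isomorphism and $H^{5q}(P_q)=0$.

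With $W^{\le 5q-1}=0$, generators from $W$ enter $M_q$ in degrees $\ge 5q$ only. Therefore $M_q^{5q-2}=P_q^{5q-2}=\Span\{st,xt,yt\}$ and $Z^{5q-2}(M_q)=\Span\{st,yt\}$, which is~\eqref{eq:critical-primitives}.

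The remaining obligations are the sign bookkeeping in $d(xt)$ and the degree-separation checks for odd $q\ge3$. Both are routine.
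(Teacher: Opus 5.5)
Your route is the paper's: you compare $P_q$ and $D_q$ degreewise through $5q-1$, where the only differences are $r$ in degree $5q-3$ and $sxya$ in degree $5q-1$, and then apply Lemma~\ref{lem:prefix} with $r=5q-1$.

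However, your proof of $H^{5q}(P_q)=0$ fails for $q=3$, which the lemma covers. You assert $P_q^{5q}=0$ because $5q$ is not attained. But your own list includes $t$ times the odd letter $a$, and $at$ has degree $(2q-1)+(4q-2)=6q-3$. This equals $5q$ exactly when $q=3$. So $P_3^{15}=\Q\,at\ne0$, and the vanishing needs an actual computation: $d(at)=xyt-a\cdot ays=xyt\neq0$ because $a^2=0$, hence $Z^{15}(P_3)=0$. Without this step, the injectivity of $H^{5q}(\phi_0)$ required by Lemma~\ref{lem:prefix} is unverified for $q=3$. So is the conclusion $W^{\le5q-1}=0$. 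The paper treats this case separately for exactly this reason.

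Two smaller corrections:
\begin{itemize}
\item In degree $5q-2$ you say that $Z^{5q-2}$ is spanned by $st,yt$ in both algebras. This is false in $D_q$: there $d(xt)=-xays$ has length four and vanishes. So $Z^{5q-2}(D_q)=\Span\{st,xt,yt\}$ and $B^{5q-2}(D_q)=\Q\,xt=\Q\,dr$. Your conclusion $H^{5q-2}=\Span\{[st],[yt]\}$ on both sides is still correct.
\item Lemma~\ref{lem:prefix} does not produce ``generators of degree $\ge m$''. With $r=5q-1$ its conclusion is literally $W^{\le5q-1}=0$, so there is no degree-convention obstacle. What you should check instead is its remaining hypothesis, that all generators of $P_q$ have degree at most $5q-1$. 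The largest, $t$, has degree $4q-2$.
\end{itemize}
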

\begin{proof}
The first nonzero word of length four in $P_q$ is $sxya$, of degree $5q-1$. A word of length four containing $t$ has higher degree. The target generator $r$ has degree $5q-3$, and its first positive-degree product has degree $6q-3>5q-1$. Through degree $5q-1$, the only two changes to the complex are therefore the addition of $r$ and the removal of $sxya$.

The critical range is
\[
\begin{array}{c|ccc}
 &5q-3&5q-2&5q-1\\ \hline
 P_q&0&\Span\{st,xt,yt\}&\Q\,sxya\\
 D_q&\Q\,r&\Span\{st,xt,yt\}&0.
\end{array}
\]
In $P_q$, the element $d(xt)=-xays$ is a nonzero multiple of $sxya$, while $st,yt$ are closed. In $D_q$, the corresponding acyclic pair is $r\mapsto xt$. The two complexes have the same classes $[st],[yt]$ and no other cohomology in this range. The differential of an old cochain never has a component along $r$; adding $r$ therefore does not change the cycles in the preceding degree either. The common parts of the complexes are unchanged elsewhere in the range. The morphism $\phi_0$ preserves their cohomology representatives and sends $[st],[yt]$ to their namesakes. Thus the cohomology identifications just obtained are induced by $H^j(\phi_0)$.

In degree $5q$, there is no monomial of $P_q$ if $q>3$. For $q=3$, the only monomial is $at$, and $d(at)=xyt\ne0$. Thus $H^{5q}(P_q)=0$. All generators of $P_q$ have degree at most $5q-1$; Lemma~\ref{lem:prefix} applies with this integer. It gives~\eqref{eq:Mq}, followed by~\eqref{eq:critical-primitives}.
\end{proof}

\Needspace{8\baselineskip}\subsection{Excluding all possible primitives}

\begin{proposition}\label{prop:separator-lower}
No acyclic differential ideal of $M_q$ contains $\mm_{M_q}^4$.
\end{proposition}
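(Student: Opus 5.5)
The plan is to argue by contradiction. Suppose $J\ideal M_q$ is an acyclic differential ideal with $\mm_{M_q}^4\subseteq J$. I will produce a cocycle of $J$ whose image under the quasi-isomorphism $\phi$ of~\eqref{eq:Mq} is one of the essential classes in~\eqref{eq:essential}. Such a cocycle must be exact in $J$, hence exact in $M_q$, hence $\phi$ of it must be exact in $D_q$. That is the contradiction.

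\emph{Step 1: the critical primitive.} The word $sxya$ lies in $\mm_{M_q}^4$ in degree $5q-1$, and it is nonzero because $s,x,y,a$ are distinct odd letters. It is closed: $d(sxya)$ is, up to sign, $sxy\cdot xy=0$. Since $J$ is acyclic, there is $u\in J^{5q-2}$ with $du=sxya$. By~\eqref{eq:critical-primitives}, $u=\lambda\,xt+\mu\,st+\nu\,yt$. Since $st$ and $yt$ are closed and $d(xt)=-xays$ is a nonzero multiple of $sxya$, we get $\lambda\ne0$. After rescaling, $u=xt+\alpha\,st+\beta\,yt\in J$ for some scalars $\alpha,\beta$. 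These are the two parameters of the family of possible corrections, and the next step must exclude every choice of them.

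\emph{Step 2: killing the corrections.} The elements $yu$ and $su$ lie in $J^{6q-2}$. Both are closed, because $y$ and $s$ are closed and $y\cdot sxya=s\cdot sxya=0$. Using $y^2=s^2=0$, they have the form
\[
 yu=\pm\,xyt\pm\alpha\,syt,\qquad su=\pm\,sxt\pm\beta\,syt.
\]
By acyclicity both are exact in $J$, so their images under $\phi$ are boundaries in $D_q$. The boundaries of $D_q$ in degree $6q-2$ are spanned by $sxt$ and $xyt$, while $[syt]\ne0$ by~\eqref{eq:essential}. This forces $\alpha=\beta=0$, so $u=xt\in J$.

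\emph{Step 3: the contradiction in top degree.} Now $a\cdot xt\in J^{7q-3}$. It is closed, since
\[
 d(axt)=xy\cdot xt\pm a\cdot xays=0,
\]
using $x^2=0$ and $a^2=0$. Acyclicity makes it exact in $J$, hence in $M_q$, so $[\phi(axt)]=[axt]=0$ in $H(D_q)$. This contradicts the independence of $[axt]$ and $[ayt]$ in~\eqref{eq:essential}.

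The main obstacle is bookkeeping. The Koszul signs (all of $s,x,y,a$ are odd since $q$ is odd) do not affect which coefficients are forced to vanish, but they need to be checked. The essential inputs are that $M_q^{5q-2}$ contains no contribution from $W$, which holds because $W^{\le5q-1}=0$, and that the normalization in Step 2 must be applied to both $\alpha$ and $\beta$ before the top-degree product is taken. If the sign check in Step 2 turned out to produce a combination such as $xyt+\alpha\,syt$ with a hidden cancellation, I would instead multiply $u$ by $a$ directly, use the independence of $axt$, $ayt$, $ast$ modulo the boundary line $\Q(xyr-axt)$, and still conclude.
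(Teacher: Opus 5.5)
Your proof is correct. It differs from the paper's only in how the two correction parameters are eliminated.

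The paper splits into cases on $\alpha$. For $\alpha\ne0$ it uses $yu$, whose class is $-\alpha[syt]$. For $\alpha=0$ it goes directly to the top degree with $au=axt+\beta\,ayt$, where the independence of $[axt]$ and $[ayt]$ disposes of every $\beta$ at once.

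You instead kill both parameters in degree $6q-2$. You use the additional product $su=sxt+\beta\,syt$, together with the fact that $sxt=-d(sr)$ is a boundary in $D_q$. This removes the case split and shows that $J$ would have to contain $xt$ itself. In the top degree you then need only $[axt]\ne0$, not the full independence in~\eqref{eq:essential}. The paper's version uses one fewer auxiliary product; yours moves the work into the degree-$(6q-2)$ computation.

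Your closing hedge about hidden sign cancellations is unnecessary. In $yu$ and $su$ the monomial $syt$ appears only with coefficient $\pm\alpha$, respectively $\pm\beta$. The remaining terms $xyt$ and $sxt$ are boundaries of $D_q$ whatever their signs.
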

\begin{proof}
Suppose that such an ideal $J$ exists. By acyclicity, the nonzero cocycle
\[
 z=d(xt)=-xays\in\mm_{M_q}^4\subseteq J
\]
has a primitive $u\in J^{5q-2}$. By~\eqref{eq:critical-primitives}, all possible primitives have the form
\begin{equation}\label{eq:all-primitives}
 u=xt+\alpha st+\beta yt,\qquad\alpha,\beta\in\Q.
\end{equation}
If $\alpha\ne0$, the element $yu\in J$ is closed: $dy=0$ and $yz=0$. We have
\[
 yu=yxt+\alpha yst=-xyt-\alpha syt,
 \qquad xyt=d(at).
\]
By~\eqref{eq:essential} and the quasi-isomorphism $\phi$, its class in $H(M_q)$ is $-\alpha[syt]\ne0$. An essential class cannot be represented by a cocycle of an acyclic ideal.

It remains to consider $\alpha=0$. The ideal then contains $au=axt+\beta ayt$. Each term is closed in the minimal algebra: its differential is a sum of terms containing $x^2$, $y^2$, or $a^2$, all of which vanish. Their images under $\phi$ have independent classes by~\eqref{eq:essential}. Thus $[au]\ne0$ for every $\beta$, giving the same contradiction.

These two cases cover every pair $(\alpha,\beta)$. No choice of primitive and no additional assumption on the ideals has been imposed.
\end{proof}

\begin{lemma}\label{lem:word-onset}
The initial degrees of the augmentation powers of $M_q$ satisfy
\[
 \nu_4(M_q)=a_3(M_q)=5q-1,
 \qquad \nu_5(M_q)=9q-3.
\]
\end{lemma}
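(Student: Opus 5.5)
We compute the three invariants of Lemma~\ref{lem:word-onset} directly from the degrees of the generators of $M_q=P_q\otimes\Lambda W$. The generators of $P_q$ have degrees $q,q,q,2q-1,4q-2$, and by \eqref{eq:Mq} every generator in $W$ has degree at least $5q$. Words of length four are computed first. Since $s,x,y$ are odd and hence square to zero, a word of length four built only from $s,x,y,a$ must contain $a$ and at least one of $s,x,y$; moreover $a$ is odd, so it appears at most once. The only candidate is $sxya$, of degree $3q+(2q-1)=5q-1$. Every other word of length four contains $t$ or a generator of $W$. Replacing a letter by $t$ or by an element of $W$ only raises the degree: $t$ has degree $4q-2\ge 2q-1$, and $W$ sits in degrees at least $5q$. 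Hence the minimal degree of such a word is at least $5q-1$, and in fact strictly larger, since for instance $sxyt$ has degree $7q-2$. This gives $\nu_4(M_q)=5q-1$, attained only by the one-dimensional space $\Q\,sxya$.

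Next we show $a_3(M_q)=5q-1$. By \eqref{eq:a-bound}, $a_3\ge\nu_4=5q-1$, so it suffices to find a nonzero class in $H^{5q-1}(\mm^4)$. Because $(\mm^4)^{5q-1}=\Q\,sxya$, this element is closed: it lies in the top possible spot, and indeed $d(sxya)=\pm sx y\,xy=0$. It is a boundary in $\mm^4$ only if it lies in $d\bigl((\mm^4)^{5q-2}\bigr)$. That space is zero, since no word of length four has degree below $5q-1$. So $[sxya]\ne0$ in $H(\mm^4)$, and this settles $a_3=5q-1$. The same element is the cocycle $-z$ used in Proposition~\ref{prop:separator-lower}, which is exact in $M_q$ but not inside $\mm^4$. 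This illustrates Remark~\ref{rem:internal}.

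For $\nu_5$ we have to minimize the degree over words of length five. Among $s,x,y,a$ there are only four odd letters, so a word of length five needs at least one letter from $\{t\}\cup W$. Using exactly one $t$ together with the four odd letters $s,x,y,a$ gives $sxyat$, of degree $5q-1+4q-2=9q-3$. We then check that every alternative has degree at least $9q-3$:
\begin{itemize}
\item Using one letter of degree at least $5q$ from $W$ together with four odd letters gives degree at least $5q+5q-1=10q-1$.
\item Using $t^2$ (allowed, since $t$ is even) together with three of the odd letters gives at least $2(4q-2)+3q=11q-4$.
\item Any word containing two or more letters from $W$, or mixing $t$ and $W$, is larger still.
\end{itemize}
The most economical choice is therefore $sxyat$, and $\nu_5(M_q)=9q-3$. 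The only point that needs care is that $W^{\le5q-1}=0$, which is guaranteed by Lemma~\ref{lem:rigid-prefix}.
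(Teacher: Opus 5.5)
Your proof is correct and follows the same route as the paper. You use the oddness of $s,x,y,a$ and $W^{\le 5q-1}=0$ to single out $sxya$ as the only word of length four in degree $5q-1$; it is closed and $(\mm^4)^{5q-2}=0$, so it gives a nonzero internal class, and $sxya\,t$ is the lowest-degree word of length five. One cosmetic point: your case list for $\nu_5$ omits words with three or more copies of $t$, but your own replacement argument (each substituted letter strictly raises the degree) handles them at once.
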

\begin{proof}
In the prefix, the first word of length four is $sxya$, in degree $5q-1$. A word of this length involving a generator of $W$ has degree at least $5q+3q=8q$. Hence the component of $\mm^4$ in degree $5q-1$ is exactly $\Q sxya$. It is closed, and $\mm^4$ has no cochain in the preceding degree; it therefore defines a nonzero internal class. This gives $\nu_4=a_3=5q-1$.

A word of length five in the prefix must contain $t$, since the other four generators are odd. With a single $t$, the minimum is attained by $sxya\,t$, in degree $9q-3$. With at least two copies of $t$, the degree is higher. A word involving a generator of $W$ and four other factors has degree at least $5q+4q=9q$. Consequently $\nu_5=9q-3$.
\end{proof}

\begin{theorem}[Separation at the first degree outside the refined bound]\label{thm:sharp}
For every odd integer $q\ge3$, the minimal model $M_q$ satisfies
\begin{equation}\label{eq:separator-values}
 \cdim M_q=7q-3,
 \qquad e_0(M_q)=\nilh(M_q)=3<4=\Hnil(M_q),
\end{equation}
and $a_3(M_q)=5q-1$. Consequently, the constant $-3$ in the bound $N\le a_n(M)+2q-3$ cannot be replaced universally by $-2$.
\end{theorem}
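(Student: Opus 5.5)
We assemble Theorem~\ref{thm:sharp} from the lemmas of this section. Each quantity in \eqref{eq:separator-values} has an identified source.

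\textbf{Cohomological dimension.} Since $\phi:M_q\to D_q$ is a quasi-isomorphism, Lemma~\ref{lem:D-cohom} gives $\cdim M_q=\cdim D_q=7q-3$.

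\textbf{Nilpotency invariants.} $D_q$ is a three-short CDGA and is quasi-isomorphic to $M_q$, so $\nilh(M_q)\le3$. For the reverse inequality we use the chain \eqref{eq:toomer-chain}: the class $[syt]\ne0$ from \eqref{eq:essential} should exhibit a length-three product or essential cycle, giving $e_0\ge3$.
\begin{itemize}
\item In $D_q$ the element $syt$ is a product of the three cocycles $s,y,t$. Here $dt=ays\neq0$, but $d(syt)$ vanishes, since $s\cdot ays$ contains $s^2$.
\item So $syt$ is a cocycle of $\mm^3$ in $M_q$ whose class is nonzero in $H(M_q)$, because $\phi$ detects it by \eqref{eq:essential}.
\item By Remark~\ref{rem:internal}, an essential cycle of length $3$ gives $e_0(M_q)\ge3$, with no need for its factors to be closed.
\end{itemize}
Hence $3\le e_0\le\nilh\le3$.

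For $\Hnil$, Proposition~\ref{prop:separator-lower} gives $\Hnil(M_q)\ge4$. For the upper bound $\Hnil\le4$, we apply Corollary~\ref{cor:degree} with $n=4$: by Lemma~\ref{lem:word-onset}, $\nu_5=9q-3>7q-3=N$, so $(\mm^5)^{\le N}=0$.

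\textbf{Value of $a_3$.} Lemma~\ref{lem:word-onset} gives $a_3(M_q)=5q-1$ directly.

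\textbf{Optimality.} Note that $a_3+2q-2=7q-3=N$. Take $n=3$. If the refined bound \eqref{eq:refined-intro} held with $-2$ in place of $-3$, then $M_q$ would satisfy it and $\nilh=3$ would force $\Hnil=3$, a contradiction. So the constant $-3$ cannot be improved universally.

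The only step that goes beyond citation is $e_0(M_q)\ge3$. We have to confirm that the representative $syt$ of $[syt]$ can be taken inside $M_q$, since $t\in P_q$. We also have to check that $\phi$ maps it to $syt$ in $D_q$, which holds because $\phi$ extends $\phi_0$ on generators. Everything else is bookkeeping from Lemmas~\ref{lem:D-cohom}, \ref{lem:rigid-prefix} and~\ref{lem:word-onset}.
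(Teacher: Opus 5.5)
Your proof is correct and is essentially the paper's own argument, built from the same Lemmas~\ref{lem:D-cohom} and~\ref{lem:word-onset}, Proposition~\ref{prop:separator-lower} and Corollary~\ref{cor:degree}. The only difference is that you witness $e_0(M_q)\ge3$ with the essential cycle $syt\in\mm_{M_q}^3$ instead of the paper's $axt$; this works equally well, since $d(syt)=sy\cdot ays=0$ and $[syt]\ne0$ by~\eqref{eq:essential} (just drop the phrase ``product of the three cocycles $s,y,t$'', since $t$ is not closed and Remark~\ref{rem:internal} already covers non-closed factors).
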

\begin{proof}
The quasi-isomorphism $M_q\to D_q$ gives the cohomological dimension and $\nilh(M_q)\le3$. The essential cycle $axt\in\mm_{M_q}^3$ gives $e_0(M_q)\ge3$, since its class is killed in $M_q/\mm_{M_q}^3$. The chain~\eqref{eq:toomer-chain} then gives $e_0=\nilh=3$.

Proposition~\ref{prop:separator-lower} gives $\Hnil(M_q)>3$. Lemma~\ref{lem:word-onset} shows that $\mm_{M_q}^5$ starts in degree $9q-3>7q-3$. Corollary~\ref{cor:degree}, applied inside $M_q$, constructs an acyclic ideal containing this power. Hence $\Hnil(M_q)\le4$.

Finally,
\[
 7q-3=(5q-1)+2q-2=a_3(M_q)+2q-2.
\]
Equality of the two invariants therefore fails at the first integer beyond the refined bound throughout this family.
\end{proof}

\begin{corollary}[A uniform bound that fails]\label{cor:not-CP}
The assertion
\[
 V^{<q}=0,\quad \nilh(M)=n,\quad
 \cdim M\le q(2n+1)-3
 \quad\Longrightarrow\quad\Hnil(M)=n
\]
is false in general.
\end{corollary}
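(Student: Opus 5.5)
Corollary~\ref{cor:not-CP} asserts that a certain uniform bound fails, so I would disprove it with a single counterexample from the family $M_q$ of Theorem~\ref{thm:sharp}. The natural choice is $n=3$. Then the hypothesis reads $\cdim M\le 7q-3$. Theorem~\ref{thm:sharp} supplies, for every odd $q\ge3$, a minimal model $M_q$ with exactly $\cdim M_q=7q-3$, $\nilh(M_q)=3$ and $\Hnil(M_q)=4$. So $M_q$ meets every hypothesis of the asserted implication and violates its conclusion.

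The steps, in order, are as follows.
\begin{enumerate}
\item Check that $M_q$ satisfies $V^{<q}=0$. By Lemma~\ref{lem:rigid-prefix}, $M_q=P_q\otimes\Lambda W$ with $W^{\le5q-1}=0$. The generators of $P_q$ have degrees $q,q,q,2q-1,4q-2$, all at least $q$.
\item Note that $M_q$ is minimal of finite type and simply connected, since $q\ge3$. It therefore lies in the class considered in Theorem~\ref{thm:main}.
\item Compute $q(2n+1)-3=7q-3$ for $n=3$. This equals $\cdim M_q$ by \eqref{eq:separator-values}, so the dimension hypothesis holds, with equality.
\item Invoke $\nilh(M_q)=3<4=\Hnil(M_q)$ from Theorem~\ref{thm:sharp}. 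This contradicts the conclusion $\Hnil(M)=n$.
\end{enumerate}

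It is worth remarking, for context, that $q(2n+1)-3$ exceeds the proven bound $q(n+3)-3$ exactly when $n>2$. At $n=3$ the two bounds differ by $q$, so there is no conflict with Theorem~\ref{thm:main}. This is consistent with the example sitting one degree beyond the refined bound, since $a_3(M_q)+2q-3=7q-4$.

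There is no real obstacle here. The only point needing care is bookkeeping: the connectivity index $q$ in the corollary must be the same $q$ used to grade $M_q$, and $M_q$ must be checked against the hypotheses as stated, including that $\cdim$ is measured on the minimal model, which holds via the quasi-isomorphism $\phi$. Taking a single odd value such as $q=3$, so that $\cdim M_3=18$, already gives a concrete counterexample.
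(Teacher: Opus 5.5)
Your proposal is correct and is essentially the paper's proof: take $n=3$ and $M=M_q$ from Theorem~\ref{thm:sharp}. Then $\cdim M_q=7q-3=q(2\cdot3+1)-3$, while $\nilh(M_q)=3<4=\Hnil(M_q)$. Your explicit check of $V^{<q}=0$ and the comparison with the proven bounds are correct additions the paper leaves implicit.
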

\begin{proof}
Take $n=3$ and $M=M_q$ in Theorem~\ref{thm:sharp}. The cohomological dimension is exactly $q(2\cdot3+1)-3$, whereas $\Hnil(M_q)=4$.
\end{proof}

\begin{remark}
For $q=3$, the three-short target has generators $s_3,x_3,y_3,a_5,t_{10},r_{12}$ and is truncated in degree $18$. Its minimal model satisfies $H^{18}\cong\Q^5$, $a_3=14$, and $\nilh=3<4=\Hnil$. The uniform bound gives $N\le15$; the refined bound, for these data, gives $N\le17$; separation occurs in degree $18$. These models do not satisfy Poincaré duality, since their top cohomology has dimension five.
\end{remark}

\section{Scope of the comparison}
\label{sec:conclusion}

Theorem~\ref{thm:main} gives a range in which an $n$-short model can be replaced by an $n$-short quotient of the minimal Sullivan algebra. Its proof constructs the required acyclic ideal inside that fixed algebra.

For the exact value $n=\nilh(M)$, the uniform equality range proved here is $N\le q(n+3)-3$. The proof provides more information when $a_n$ or $\nu_{n+1}$ is known. Only the constant in the refined bound is asserted to be optimal. The examples in Section~\ref{sec:sharp} do not determine, for every connectivity, the smallest cohomological dimension in which a three-short separation occurs.

The positive results apply to Poincaré duality types in the same range, since duality appears in none of the hypotheses. The counterexamples presented here do not belong to this subclass. Extending the range under duality assumptions would therefore require a new argument; such an extension is neither established nor refuted by the present work.

Finally, every upper bound on $\Hnil$ has been obtained from an acyclic ideal in the minimal model. The truncated targets in the examples are explicitly nonminimal; their purpose is to define a rational type and to bound its $\nilh$. The upper bound on $\Hnil$ is always proved separately. This distinction makes the dimension bound a strictification theorem rather than a mere change of notation.

\appendix
\section{Extending a minimal prefix from a prescribed degree}
\label{sec:appendix}

The following lemma starts the standard degreewise construction of \cite[Proposition~12.2 and its construction, pp.~144--146]{FHT} from a prescribed minimal prefix.

\begin{lemma}\label{lem:prefix}
Let $P=(\Lambda U,d)$ be a simply connected minimal Sullivan algebra of finite type, whose generators have degrees at most $r$. Let $g:P\to A$ be a morphism to a simply connected CDGA of finite type. Suppose that $H^j(g)$ is bijective for $j\le r$ and injective for $j=r+1$. There is a finite-type extension that is minimal as an absolute algebra,
\[
 g_\infty:P\otimes\Lambda W\qis A,
 \qquad W^{\le r}=0,
\]
extending $g$.
\end{lemma}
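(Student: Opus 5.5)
The plan is to run the standard inductive construction of a minimal model (as in \cite[Proposition~12.2]{FHT}) starting from $g$ at stage $r$, adjoining generators only in degrees $\ge r+1$. Write $g_r=g:P\to A$. By induction on $k\ge r$, I will construct minimal extensions $g_k:P\otimes\Lambda W^{\le k}\to A$ with $W^{\le r}=0$ such that $H^j(g_k)$ is bijective for $j\le k$ and injective for $j=k+1$. The base case $k=r$ is exactly the hypothesis.

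For the inductive step from $k$ to $k+1$, I adjoin two families of generators of degree $k+1$, both with finite bases by finite type.
\begin{itemize}
\item \emph{Surjectivity in degree $k+1$.} Take closed generators $w$ with $dw=0$, mapped to cocycles of $A$ that represent a complement of the image of $H^{k+1}(g_k)$.
\item \emph{Injectivity in degree $k+2$.} Take generators $w'$ with $dw'=z$, where $z$ runs through cocycles of $P\otimes\Lambda W^{\le k}$ in degree $k+2$ representing a basis of $\ker H^{k+2}(g_k)$. For each such $z$, choose $\alpha\in A^{k+1}$ with $g_k(z)=d\alpha$ and send $w'\mapsto\alpha$.
\end{itemize}

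Minimality requires each $z$ to be decomposable. Since the source is simply connected, elements of degree $k+2$ lie in $\Lambda^{\ge2}$ plus linear generators of degree $k+2$. There are no such generators: $U$ sits in degrees $\le r\le k$ and $W^{\le k}$ in degrees $\le k$. So $z$ is decomposable, and in addition $dw'=z$ has factors of lower degree.

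The main obstacle is that adjoining degree-$(k+1)$ generators creates new elements in degree $k+2$, for instance products of $w'$ with degree-one elements. These do not exist in the simply connected case, because the new products $w\cdot(\text{positive})$ have degree $\ge k+3$. Hence in degrees $\le k+2$ the complex changes only by the span of the new generators themselves. The map in degree $k+1$ is then surjective on cohomology by construction. It remains injective: new cocycles of degree $k+1$ come only from closed $w$, which map independently of the old image.

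In degree $k+2$, the new boundaries kill exactly the kernel, but new cocycles could appear in degree $k+2$ as products, which occur only at degree $\ge k+3$. Hence $H^{k+2}(g_{k+1})$ is injective: a cocycle mapping to a boundary is, modulo old boundaries, in the old kernel, now killed. This careful degree bookkeeping is the step I would write out in detail.

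Finally, $g_\infty=\varinjlim g_k$ is a quasi-isomorphism, since cohomology commutes with the colimit and each degree stabilizes. Finite type holds because each stage adds finitely many generators, by finite type of $A$ and of the source. The result is minimal with $W^{\le r}=0$ and extends $g$.
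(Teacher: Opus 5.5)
Your proposal is correct and follows the paper's proof essentially step for step, up to a shift of index by one. In both, the construction goes degree by degree: closed generators are adjoined for a complement of the cohomological image, and killing generators for the kernel one degree higher. Decomposability of each $z$ comes from the absence of generators in degree $k+2$, and simple connectivity keeps new products out of degrees $\le k+2$.

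The one point to write out explicitly, as the paper does, is why a closed element of degree $k+1$ cannot have a nonzero component along the $w'$. Differentiating such an element would give a relation among the classes $[z]$ in the old $H^{k+2}$, and these classes are independent.
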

\begin{proof}
Proceed degree by degree, starting at $k=r+1$. At the beginning of step $k$, suppose that the source has been constructed with generators in degrees at most $k-1$, and that the comparison is bijective in cohomology through degree $k-1$ and injective in degree $k$.

Choose cocycles of $A^k$ whose classes form a basis of a complement to the cohomological image. Adjoin closed generators of degree $k$ and send them to these cocycles. The comparison becomes bijective in degree $k$. No positive-degree product of these generators occurs in degree $k+1$, since positive degrees start at two.

The kernel of the comparison in degree $k+1$ is represented by cocycles $z_i$ in the algebra preceding the new closed generators. Choose the $[z_i]$ as a basis of this kernel. Their images are exact in $A$; take $b_i\in A^k$ with $db_i=g(z_i)$. Adjoin generators $w_i$ of degree $k$ with $dw_i=z_i$, and send them to $b_i$.

Each $z_i$ is decomposable, since the source has no generator in degree $k+1$. In a decomposable monomial of degree $k+1$, all factors have degrees at most $k-1$, because positive degrees are at least two. The new differentials are therefore compatible with a Sullivan ordering and preserve absolute minimality.

A new closed combination of degree $k$ with a nonzero linear part in the $w_i$ would, after differentiation, give a relation among the independent classes $[z_i]$. Such a combination cannot exist. Bijectivity in degree $k$ is preserved, while the comparison becomes injective in degree $k+1$. This prepares the next step.

At the limit, every component and every cohomology group stabilize. The construction preserves finite type. The resulting extension is minimal, all its new generators have degree at least $r+1$, and its morphism to $A$ is a cohomology isomorphism in every degree.
\end{proof}

\begin{remark}
The lemma does not provide a surjection onto $A$. In the examples, it is used only to construct the full minimal model without changing the range containing the primitives under study. The upper bounds on $\Hnil$ come from ideals subsequently constructed in this minimal source.
\end{remark}

\section*{Declaration on the use of generative artificial intelligence}
During the preparation of this manuscript, the author used ChatGPT
(OpenAI) as an assistive tool for language editing, \LaTeX{} preparation,
and the verification and exploration of mathematical arguments. All
mathematical statements, proofs, references, and conclusions were
independently reviewed by the author, who assumes full responsibility for
the content of the article.
\end{document}